\title{Combination of open covers with $\pi_1$-constraints}
\date{May 7, 2025.\ \copyright{\ P.~Capovilla, K.~Li, C.~L\"oh 2025}.
  This work was partially supported by the CRC~1085 \emph{Higher Invariants}
  (Universit\"at Regensburg, funded by the DFG)}
\subjclass[2020]{55M30, 57M07}
\keywords{Generalised Lusternik--Schnirelmann category, amenable covers, simplicial volume, topological complexity}
\author[P.~Capovilla]{Pietro Capovilla}
\address{Scuola Normale Superiore, 56126 Pisa, Italy}
\email{pietro.capovilla@sns.it}
\author[K.~Li]{Kevin Li}
\address{Fakult\"at f\"ur Mathematik, Universit\"at Regensburg, 93040 Regensburg, Germany}
\email{kevin.li@ur.de}
\author[C.~L\"oh]{Clara L\"oh}
\address{Fakult\"at f\"ur Mathematik, Universit\"at Regensburg, 93040 Regensburg, Germany}
\email{clara.loeh@ur.de}
\theoremstyle{definition}
\newtheorem{defn}{Definition}[section]
\newtheorem{ex}[defn]{Example}
\newtheorem{rem}[defn]{Remark}
\newtheorem{setup}[defn]{Setup}
\theoremstyle{plain}
\newtheorem{thm}[defn]{Theorem}
\newtheorem{lem}[defn]{Lemma}
\newtheorem{prop}[defn]{Proposition}
\newtheorem{cor}[defn]{Corollary}
\numberwithin{equation}{section}
\newcommand{\enum}{\rm{(\roman*)}}
\newcommand{\IN}{\ensuremath{\mathbb{N}}}
\newcommand{\IR}{\ensuremath{\mathbb{R}}}
\newcommand{\IZ}{\ensuremath{\mathbb{Z}}}
\newcommand{\calD}{\ensuremath{\mathcal{D}}}
\newcommand{\calF}{\ensuremath{\mathcal{F}}}
\newcommand{\calQ}{\ensuremath{\mathcal{Q}}}
\newcommand{\calS}{\ensuremath{\mathcal{S}}}
\newcommand{\AME}{\ensuremath{\mathsf{Am}}}
\newcommand{\FIN}{\ensuremath{\mathsf{Fin}}}
\newcommand{\TR}{\ensuremath{\mathsf{Tr}}}
\DeclareMathOperator{\cat}{cat}
\DeclareMathOperator{\gd}{gd}
\DeclareMathOperator{\cd}{cd}
\DeclareMathOperator{\vcd}{vcd}
\DeclareMathOperator{\TC}{TC}
\DeclareMathOperator{\im}{im}
\DeclareMathOperator{\id}{id}
\DeclareMathOperator{\Cyl}{Cyl}
\DeclareMathOperator{\pr}{pr}
\newcommand{\brW}[1]{\ensuremath{W\!\langle{#1}\rangle}}
\newcommand{\catres}[2]{\ensuremath{\cat_{#1|_{#2}}\!(#2)}}
\begin{document}

\begin{abstract}
	Let~$G$ be a group and let~$\calF$ be a family of subgroups of~$G$.
	The generalised Lusternik--Schnirelmann category~$\operatorname{cat}_\mathcal{F}(G)$ is the minimal cardinality of covers of~$BG$ by open subsets with fundamental group in~$\calF$.
	We prove a combination theorem for~$\operatorname{cat}_\mathcal{F}(G)$ in terms of the stabilisers of contractible $G$-CW-complexes.
	As applications for the amenable category, we obtain vanishing results for the simplicial volume of gluings of manifolds (along not necessarily amenable boundaries) and of cyclic branched coverings.
	Moreover, we deduce an upper bound for Farber's topological complexity, generalising an estimate for amalgamated products of Dranishnikov--Sadykov. 
\end{abstract}

\maketitle	
	
\section{Introduction}
Measures of complexity for discrete groups can be defined topologically via  classifying spaces.
A classical example is the minimal dimension. 
The \emph{geometric dimension~$\gd(G)$} of a group~$G$ is the infimum of~$n\in \IN$ for which there exists an $n$-dimensional CW-model for~$BG$.
A useful tool for estimating the geometric dimension is the following combination theorem.
For a cell~$\sigma$ of a $G$-CW-complex, we denote its stabiliser by $G_\sigma\coloneqq \{g\in G\mid g\sigma=\sigma\}$.

\begin{thm}[{\cite{Brown82,Geoghegan08}}]
\label{thm:combination_gd}
	Let~$G$ be a group and let~$X$ be a (non-equivariantly) contractible $G$-CW-complex.
	Let~$\Sigma$ be a set of $G$-orbit representatives of all cells of~$X$.
	Then
	\[
		\gd(G)\le \sup_{\sigma\in \Sigma} \bigl( \gd(G_\sigma)+\dim(\sigma)\bigr).
	\]
\end{thm}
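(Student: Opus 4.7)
The plan is to construct a free contractible $G$-CW-complex $\widetilde Y$ of dimension at most $\sup_{\sigma\in\Sigma}(\gd(G_\sigma)+\dim\sigma)$; the quotient $\widetilde Y/G$ will then be the desired small model for $BG$. If some $\gd(G_\sigma)$ is infinite the bound is vacuous, so I would assume $d_\sigma\coloneqq\gd(G_\sigma)$ is finite for every $\sigma\in\Sigma$ and fix, for each such $\sigma$, a free contractible $G_\sigma$-CW-model $E_\sigma$ for $EG_\sigma$ with $\dim E_\sigma=d_\sigma$.

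I would build $\widetilde Y$ inductively over the equivariant skeleta of $X$, together with a $G$-equivariant projection $p\colon\widetilde Y\to X$ whose fibre over the interior of each orbit of $n$-cells is a copy of $E_\sigma$. At stage $n$, for each orbit representative $\sigma\in\Sigma$ with $\dim\sigma=n$, the attaching map $\partial D^n\to X^{(n-1)}$ should be covered by a $G_\sigma$-equivariant map $E_\sigma\times\partial D^n\to\widetilde Y^{(n-1)}$: the induction hypothesis guarantees that the target is a contractible ``$E$-bundle'' over $X^{(n-1)}$ in the appropriate weak sense, so the obstructions to lifting vanish. Gluing in $G\times_{G_\sigma}(E_\sigma\times D^n)$ along this map and extending $p$ via the $D^n$-factor contributes cells of dimension at most $d_\sigma+n$; hence $\dim\widetilde Y\le\sup_\sigma(d_\sigma+\dim\sigma)$.

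The verification has two parts. Freeness of the $G$-action on $\widetilde Y$ is immediate from the construction, since each $E_\sigma$ is $G_\sigma$-free and the gluing is via induction from subgroups $G\times_{G_\sigma}(-)$. For contractibility, I would argue that $p\colon\widetilde Y\to X$ is a weak equivalence: its point-preimages are contractible (each is an $E_\sigma$), and an equivariant Mayer--Vietoris/Whitehead argument over the cell filtration of $X$ promotes this fibrewise contractibility to a global weak equivalence. Since $X$ is contractible, so is $\widetilde Y$, and $\widetilde Y/G$ provides a CW-model for $BG$ of dimension at most $\sup_\sigma(d_\sigma+\dim\sigma)$.

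The main obstacle is the inductive step: making coherent choices of the lifted attaching maps $E_\sigma\times\partial D^n\to\widetilde Y^{(n-1)}$ so that the property ``$p$ has contractible fibres over each orbit'' is preserved and so that distinct orbit representatives glue along shared faces. This is standard equivariant obstruction theory --- the obstruction classes live in cohomology groups with coefficients in homotopy groups of the contractible fibres and therefore vanish --- but setting up the bookkeeping cleanly (and then using it to promote fibrewise contractibility of $p$ to contractibility of the total space) is where the bulk of the technical work lies, and is essentially the argument carried out in \cite{Brown82,Geoghegan08}.
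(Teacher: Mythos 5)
Your proposal is precisely the blow-up construction that the paper records as Theorem~\ref{thm:blow-up} (due to L\"uck) and then invokes to deduce Theorem~\ref{thm:combination_gd} as an immediate consequence: replacing each orbit of cells $G/G_\sigma \times D^i$ of~$X$ by $G\times_{G_\sigma}(EG_\sigma\times D^i)$ yields a free $G$-CW-complex, $G$-homotopy equivalent to $EG\times X$, of dimension at most $\sup_\sigma\bigl(\gd(G_\sigma)+\dim\sigma\bigr)$. So the proposal is correct and follows the same route as the paper.
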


The upper bound in Theorem~\ref{thm:combination_gd} is independent of the choice of orbit representatives and it is meaningful only if~$X$ is finite dimensional.

\subsection*{Open covers with $\pi_1$-constraints}
A different measure of complexity is given by the minimal cardinality (or multiplicity) of open covers with constraints on fundamental groups.
This notion is similar in spirit to the classical Lusternik--Schnirelmann category~\cite{CLOT03}.
Let~$G$ be a group.
A \emph{family~$\calF$ of subgroups of~$G$} is a non-empty set of subgroups of~$G$ that is closed under conjugation and taking subgroups.
Typical families are~$\TR$, $\FIN$, and~$\AME$ consisting of the trivial subgroup, all finite subgroups, and all amenable subgroups, respectively.

\begin{defn}
\label{defn:catF}
	Let~$Y$ be a path-connected (pointed) topological space and let~$\calF$ be a family of subgroups of~$\pi_1(Y)$.
	The \emph{$\calF$-category~$\cat_\calF(Y)$ of~$Y$} is defined as the infimum of~$n\in \IN$ for which there exists a cover of~$Y$ by (not necessarily path-connected) open subsets~$U_0,U_1,\ldots,U_n$ such that
	 \[
	 	\im\bigl(\pi_1(U_i,y)\to \pi_1(Y,y)\bigr)\in \calF
	 \]
	 for all~$i\in \{0,\ldots,n\}$ and all~$y\in U_i$.
	 
	 Let~$G$ be a group and let~$\calF$ be a family of subgroups of~$G$.
	 Then we define the \emph{$\calF$-category of~$G$} as $\cat_\calF(G)\coloneqq \cat_\calF(BG)$.
	 If there exists a $\pi_1$-isomorphism $Y\to Z$, then pulling back open covers shows $\cat_\calF(Y)\le \cat_\calF(Z)$.
	 In particular, $\cat_\calF(G)$ is well-defined and we have $\cat_\calF(Y)\le \cat_\calF(\pi_1(Y))$.
\end{defn}

We have $\cat_\calF(G)=0$ if and only if~$G\in \calF$.
Moreover, $\cat_\calF(G)=1$ if and only if~$G\notin \calF$ and~$G$ is the fundamental group of a graph of groups whose vertex groups (and hence also edge groups) lie in~$\calF$~\cite[Proposition~5.3]{CLM22}.
In the literature~\cite{CLM22,Loeh-Moraschini22} also a different normalisation is used, producing values that are larger by~$1$ than ours.

The $\calF$-category has many interesting special cases.
The trivial category~$\cat_\TR(G)$ equals the cohomological dimension~$\cd(G)$ for every group~$G$ by a classical result of Eilenberg--Ganea~\cite{EG57}, Stallings~\cite{Sta68}, and Swan \cite{Swa69}.
The finite category~$\cat_\FIN(G)$ of a virtually torsion-free group~$G$ is bounded from below by the virtual cohomological dimension~$\vcd(G)$ and equality holds, e.g., for right-angled Coxeter groups~\cite{Li23}.
The amenable category~$\cat_\AME(G)$ appears in vanishing results for $\ell^2$-Betti numbers~\cite{Sauer09} and (the comparison map in) bounded cohomology~\cite{Gromov82}. 
These homological invariants of~$G$ vanish in all degrees larger than~$\cat_\AME(G)$, in other words, they are obstructions to the existence of amenable covers. 
Moreover, for aspherical closed manifolds, small amenable category 
leads also to the vanishing of integral foliated simplicial volume and stable integral simplicial volume~\cite{Loeh-Moraschini-Sauer-22}. Conversely, small minimal volume 
implies small amenable category~\cite[Section~3.4]{Gromov82}. 
The $\calF$-category with respect to families~$\calF$ of subgroups with controlled growth is related to the (non-)vanishing of minimal volume entropy~\cite{Babenko-Sabourau21,Loeh-Moraschini22}.
The diagonal category~$\cat_\calD(G\times G)$, where~$\calD$ is the smallest family of subgroups of~$G\times G$ containing the diagonal subgroup, equals Farber's topological complexity~$\TC(G)$~\cite{FGLO19}.

\subsection*{Combination theorems}
We prove two combination theorems for the $\calF$-category of groups and, in fact, a common generalisation (Theorem~\ref{thm:main}).
For a family~$\calF$ of subgroups of~$G$ and a subgroup~$H$ of~$G$, we denote the restricted family of subgroups of~$H$ by $\calF|_H\coloneqq \{F\subset H\mid F\in \calF\}$.

\begin{thm}
\label{thm:main_max}
	Let~$G$ be a group and let~$X$ be a (non-equivariantly) contractible $G$-CW-complex.
	Let~$\Sigma_0$ and~$\Sigma_{\ge 1}$ be sets of $G$-orbit representatives of all $0$-cells and all positive dimensional cells of~$X$, respectively.
	Let~$\calF$ be a family of subgroups of~$G$.
	Then
	\[
		\cat_\calF(G)\le 
		\max\Bigl\{\sup_{v\in \Sigma_0} \catres{\calF}{G_v}, \sup_{\sigma\in \Sigma_{\ge 1}}\bigl(\gd(G_\sigma)+\dim(\sigma)\bigr)\Bigr\}.
	\]
\end{thm}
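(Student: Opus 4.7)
The plan is to realise $BG$ as the Borel construction $EG\times_G X$, which is homotopy equivalent to $BG$ since $X$ is contractible. The $G$-CW-structure on $X$ decomposes $EG\times_G X$ into cell pieces of the form $BG_\sigma\times D^{\dim\sigma}$, one for each orbit representative $\sigma\in\Sigma_0\cup\Sigma_{\ge 1}$, glued along their boundaries via the attaching data of $X$ and the stabiliser inclusions. Write $n$ for the right-hand side of the claimed inequality.

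I first fix local covers of these cell pieces. For each $v\in\Sigma_0$, I choose an open cover $W^v_0,\ldots,W^v_{n_v}$ of $BG_v$ realising $\catres{\calF}{G_v}=n_v\le n$, so that each $\pi_1$-image lies in $\calF|_{G_v}\subset\calF$. For each $\sigma\in\Sigma_{\ge 1}$, I replace $BG_\sigma$ by a CW-model of dimension $\gd(G_\sigma)$; then the cell piece $BG_\sigma\times D^{\dim\sigma}$ is a CW-complex of dimension at most $\gd(G_\sigma)+\dim\sigma\le n$. The standard barycentric colouring (open stars of barycentres in a simplicial subdivision, grouped by dimension) yields a cover $V^\sigma_0,\ldots,V^\sigma_{\gd(G_\sigma)+\dim\sigma}$ of the cell piece by at most $n+1$ open sets, each a disjoint union of contractible opens and hence with trivial $\pi_1$-image in $BG$.

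I then distribute the local covers into $n+1$ colour classes via surjections $\{0,\ldots,n\}\twoheadrightarrow\{0,\ldots,n_v\}$ and $\{0,\ldots,n\}\twoheadrightarrow\{0,\ldots,\gd(G_\sigma)+\dim\sigma\}$, and assemble a global open set $U_c\subset BG$ for each colour $c$ by unioning the colour-$c$ local sets together with small thickenings along the mapping cylinders of the attaching maps. Concretely, I restrict each cell-piece cover to an inner core of the cell piece and thicken the $W^v_\bullet$ and the $V^\sigma_\bullet$ outward into the adjacent collars, choosing the thickening radii inductively over the skeletal filtration of $X$ and small enough that the collars used by different cells are pairwise disjoint. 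Each path component of $U_c$ then strongly deformation retracts onto a single core local piece via the mapping cylinder retraction, so its $\pi_1$-image coincides with that of the core and lies in $\calF$; the $U_c$ cover $BG$ by construction.

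The hardest part will be making the thickening bookkeeping compatible with both requirements at once: the thickenings must be small enough to avoid connecting distinct local pieces of the same colour through a common path component (which would leak $\pi_1$-image outside $\calF$), yet large enough that the collars of lower-dimensional covers meet the cores of the higher-dimensional covers so that everything is actually covered. This forces a careful nested choice of thickening radii, proceeding by induction over the skeleta of $X$ and using the mapping cylinder structure to produce the required strong deformation retractions.
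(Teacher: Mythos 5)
Your proposal takes a genuinely different route from the paper, but it has a real gap in the key step.

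The paper works entirely on the $G$-equivariant side: it reformulates $\cat_\calF(G)$ as $\cat^G_\calF(EG)$, the minimal dimension of a $G$-CW-complex with isotropy in $\calF$ admitting a $G$-map from $EG$ (Proposition~\ref{prop:CLM}, proved in~\cite{CLM22}), then applies L\"uck's blow-up $\widehat X \simeq_G EG\times X$ (Theorem~\ref{thm:blow-up}), and inductively bounds $\cat^G_\calF(\widehat X_i)$ through the pushout estimate of Lemma~\ref{lem:key}~\ref{item:key_max}. All the open-cover bookkeeping is thereby isolated once and for all in Proposition~\ref{prop:CLM}; after that the argument is pure dimension-counting on $G$-CW-pushouts. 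The paper explicitly chooses this route \emph{because} it avoids constructing open covers by hand. You are instead attempting the direct open-cover construction on $EG\times_G X$, which is exactly the strategy the paper deliberately sidesteps, and the two sources of difficulty that this entails are not resolved in your sketch.

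First, your decomposition uses pieces $BG_\sigma\times D^{\dim\sigma}$ with $BG_\sigma=EG/G_\sigma$, and you then ``replace $BG_\sigma$ by a CW-model of dimension $\gd(G_\sigma)$.'' Doing this replacement compatibly with all the attaching maps across all skeleta is precisely the content of L\"uck's blow-up theorem; it is not automatic, and you invoke it implicitly without control over the resulting gluing data. Second, and more seriously, the assertion that ``each path component of $U_c$ strongly deformation retracts onto a single core local piece'' does not follow from the mapping-cylinder structure. For the colour-$c$ sets to actually cover $BG$, a thickened lower-dimensional piece $W'$ must overlap the inner-core pieces $V$ of an adjacent higher-dimensional cell. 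If $V$ is a contractible open and $V\cap W'$ is disconnected (which you cannot rule out: $V$ meets the collar in an open subset whose shape you do not control), then by Van Kampen $\pi_1(V\cup W')$ acquires free generators over $\pi_1(W')$, and the $\pi_1$-image of that component of $U_c$ need no longer lie in $\calF$. Shrinking the thickenings to avoid this destroys coverage; enlarging them to restore coverage re-creates the overlap. Your sketch acknowledges this tension but offers no mechanism to resolve it, and I do not see one that works at this level of generality. This is exactly the kind of pathology that the paper's passage to $\cat^G_\calF$ and $G$-pushouts is designed to bypass.
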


While Theorem~\ref{thm:main_max} is similar to Theorem~\ref{thm:combination_gd}, the following is analogous to a standard estimate for the Lusternik--Schnirelmann category of homotopy pushouts.

\begin{thm}
\label{thm:main_sum}
	Let~$G$ be a group and let~$X$ be a (non-equivariantly) contractible $G$-CW-complex.
	For every~$i\in \IN$, let~$\Sigma_i$ be a set of $G$-orbit representatives of all $i$-cells of~$X$.
	Let~$\calF$ be a family of subgroups of~$G$.
	Then
	\[
		\cat_\calF(G)\le \sup_{v\in \Sigma_0}\catres{\calF}{G_v}+\sum_{i\in \IN_{\ge 1}} \sup_{\sigma\in \Sigma_i}\bigl(\catres{\calF}{G_\sigma}+1\bigr).
	\]
\end{thm}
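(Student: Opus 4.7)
The plan is to work in the Borel construction $Y := EG \times_G X$, which models $BG$ since $X$ is (non-equivariantly) contractible, and to induct on its skeleta $Y^{(i)} := EG \times_G X^{(i)}$. If the right-hand side of the stated inequality is infinite there is nothing to prove; otherwise each summand in the sum is at least $1$, forcing $\Sigma_i$ to be empty for all sufficiently large $i$, so we may assume $X$ is finite-dimensional, say of dimension $d$, with $Y = Y^{(d)}$.

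The inductive claim is that for every $i \in \{0,\ldots,d\}$ there is an open neighborhood $N^{(i)}$ of $Y^{(i)}$ in $Y$ admitting an open cover of cardinality
\[
\sup_{v\in \Sigma_0} \catres{\calF}{G_v} + 1 + \sum_{j=1}^{i} \bigl(\sup_{\sigma\in \Sigma_j}\catres{\calF}{G_\sigma}+1\bigr)
\]
whose members all have $\pi_1$-image in $G=\pi_1(Y)$ contained in $\calF$; instantiating this at $i=d$ (with $N^{(d)}=Y$) yields the theorem. For the base case, $Y^{(0)}=\bigsqcup_{v\in \Sigma_0} BG_v$, since each $G$-orbit of $0$-cells contributes a component $EG \times_G (G/G_v) \simeq BG_v$. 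I would choose, for each $v$, an open cover of $BG_v$ witnessing $\catres{\calF}{G_v}$, pad to common cardinality $\sup_v \catres{\calF}{G_v}+1$, take componentwise disjoint unions, and extend these sets across a standard open neighborhood of $Y^{(0)}$ in $Y$ that deformation retracts onto $Y^{(0)}$; the $\pi_1$-images in $G$ remain unchanged by such a thickening.

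For the inductive step, attaching the $i$-cells and then applying $EG \times_G(-)$ glues, for each $\sigma\in \Sigma_i$, a subspace homotopy equivalent to $BG_\sigma \times D^i$ onto $Y^{(i-1)}$ along $BG_\sigma \times S^{i-1}$ (using that cell stabilisers act trivially on the interior of their cell). My plan is to (a) enlarge every cover-set of $N^{(i-1)}$ across an open collar of $BG_\sigma\times S^{i-1}$ inside $BG_\sigma\times D^i$, which deformation retracts back to the original set and so preserves its $\pi_1$-image; and (b) on a slightly shrunk open disk $D^i_\circ$ disjoint from that collar, pull back a $\catres{\calF}{G_\sigma}$-witnessing open cover of $BG_\sigma$ along the projection $BG_\sigma \times D^i_\circ \to BG_\sigma$, pad to cardinality $\sup_\sigma \catres{\calF}{G_\sigma}+1$, and take disjoint unions over $\sigma \in \Sigma_i$. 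Each new cover-set has $\pi_1$-image in $G_\sigma$, hence in $\calF$, and combined with the thickenings this produces $N^{(i)}$ of the claimed cardinality.

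The main technical obstacle I foresee is the collar bookkeeping: one must arrange the collars of the attaching tubes $BG_\sigma\times S^{i-1}$ simultaneously for all $\sigma$ and $i$, compatibly with the skeletal filtration, so that each thickened cover-set genuinely deformation retracts onto its trace in $N^{(i-1)}$ and so that the thickenings together with the new core sets actually exhaust an open neighborhood of $Y^{(i)}$. Once this CW-geometric setup is in place, the $\pi_1$-image conditions are automatic, because inclusions through deformation retractions preserve $\pi_1$-images in $Y$, and the count of cover-sets is an immediate sum.
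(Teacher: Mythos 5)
Your strategy is correct in outline, and it is genuinely different from the paper's proof, so let me compare the two.

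The paper does not build open covers at all. It first invokes a prior result (Proposition~\ref{prop:CLM}, from~\cite{CLM22}) which converts the open-cover quantity $\cat_\calF(Y)$ into the purely equivariant quantity $\cat^G_\calF(\widetilde{Y})$, defined via $G$-maps to $G$-CW-complexes with isotropy in~$\calF$. It then replaces $EG\times X$ by the L\"uck blow-up $\widehat{X}$ (Theorem~\ref{thm:blow-up}) and inducts over the filtration $\widehat{X}_i$; the inductive step is the mapping-cylinder construction of Lemma~\ref{lem:key}~\ref{item:key_sum}, which glues the existing target $N_1$ and a cylinder over $N_0\times N_1$ into a new $G$-CW-complex of dimension $\dim N_1 + \dim N_0 + 1$ with isotropy in $\calF$. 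No collar, no deformation retraction, no cardinality counting of open sets appears anywhere: the ``sum $+1$'' is just an addition of CW-dimensions. Your proposal instead works directly in the Borel construction $Y = EG\times_G X$ and its filtration $Y^{(i)} = EG\times_G X^{(i)}$, and constructs the open cover by hand: thicken the cover of $Y^{(i-1)}$ along a mapping-cylinder collar of the attaching maps $BG_\sigma\times S^{i-1}\to Y^{(i-1)}$, then add $\sup_\sigma\catres{\calF}{G_\sigma}+1$ new sets covering the disk cores. This is in effect re-deriving (inline, and only for this filtration) the translation that the paper outsources to Proposition~\ref{prop:CLM}. What you gain is a more elementary, self-contained argument that does not invoke~\cite{CLM22}; what you pay for it is the CW-geometric bookkeeping you flag yourself, which the paper avoids entirely by staying at the level of $G$-CW dimensions.

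Two small points. First, when you say the shrunk disk $D^i_\circ$ should be ``disjoint from that collar'': it must \emph{overlap} the collar, otherwise the annulus between them is left uncovered; the intended configuration is a collar $\{r > c\}$ and $D^i_\circ = \{r < c'\}$ with $c < c'$. Second, the identification of $\pi_1$-images does need the remark that $BG_\sigma\times D^i\to Y$ induces the inclusion $G_\sigma\hookrightarrow G$ on fundamental groups, so that a subgroup of $G_\sigma$ lying in $\calF|_{G_\sigma}$ remains in $\calF$ when viewed as a subgroup of $G$; you use this implicitly but it is worth stating, since it is exactly where the family condition enters. Apart from that, the retraction argument you outline for preserving $\pi_1$-images through the thickening is sound: the retraction preserves the fibers, hence preserves the pulled-back sets, hence is a deformation retraction of each thickened set onto its trace in $Y^{(i-1)}$.
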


In the special case that~$X$ is $1$-dimensional, we obtain two upper bounds for the $\calF$-category of graphs of groups.

\begin{cor}
\label{cor:gog}
	Let~$G$ be the fundamental group of a graph of groups with vertex groups~$(G_v)_{v\in V}$ and edge groups~$(G_e)_{e\in E}$.
	Let~$\calF$ be a family of subgroups of~$G$.
	Then the following hold:
	\begin{enumerate}[label=\enum]
	\item\label{item:gog_max}
	$\cat_\calF(G)\le \max\bigl\{\sup_{v\in V}\catres{\calF}{G_v},\, \sup_{e\in E}\bigl(\gd(G_e)+1\bigr)\bigr\}$;
	\item\label{item:gog_sum}
	$\cat_\calF(G)\le \sup_{v\in V}\catres{\calF}{G_v}+\sup_{e\in E}\bigl(\catres{\calF}{G_e}+1\bigr)$.
    \qed
	\end{enumerate}
\end{cor}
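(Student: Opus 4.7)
The plan is to apply Theorems~\ref{thm:main_max} and~\ref{thm:main_sum} to the Bass--Serre tree~$T$ associated to the graph-of-groups decomposition of~$G$. By Bass--Serre theory, $T$ is a contractible $1$-dimensional $G$-CW-complex whose quotient is the underlying graph of the graph of groups; the stabilisers of $0$-cells and $1$-cells are conjugates (in~$G$) of the vertex groups~$G_v$ and the edge groups~$G_e$, respectively. Thus $V$ furnishes a set~$\Sigma_0$ of $G$-orbit representatives of $0$-cells, $E$ furnishes a set~$\Sigma_1$ of $G$-orbit representatives of $1$-cells, and $\Sigma_i=\emptyset$ for all~$i\ge 2$.

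For part~\ref{item:gog_max}, I would apply Theorem~\ref{thm:main_max} to~$T$. Every cell in~$\Sigma_{\ge 1}$ has dimension~$1$, so the term $\gd(G_\sigma)+\dim(\sigma)$ specialises to $\gd(G_e)+1$ for $e\in E$, and the resulting bound is precisely the right-hand side of~\ref{item:gog_max}. For part~\ref{item:gog_sum}, I would apply Theorem~\ref{thm:main_sum} to~$T$; since $\Sigma_i$ is empty for $i\ge 2$, the sum $\sum_{i\in\IN_{\ge 1}}$ collapses to the single summand for $i=1$, which equals $\sup_{e\in E}\bigl(\catres{\calF}{G_e}+1\bigr)$, yielding~\ref{item:gog_sum}.

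There is no substantial obstacle: the whole argument is a direct specialisation of the two main theorems to the $1$-dimensional case. The only point requiring care is that the stabilisers in~$T$ agree with the prescribed vertex and edge groups only up to conjugation in~$G$; but since~$\calF$ is closed under conjugation, both $\catres{\calF}{G_v}$ and $\gd(G_e)$ are invariant under this identification, so the passage from the stabilisers produced by Bass--Serre theory to the abstract vertex and edge groups of the graph of groups is harmless.
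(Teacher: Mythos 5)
Your proposal is correct and takes essentially the same route the paper intends: the corollary is stated with an immediate \qed following the remark that the $1$-dimensional case of Theorems~\ref{thm:main_max} and~\ref{thm:main_sum} applied to the Bass--Serre tree yields exactly these bounds. Your observation that the identification of cell stabilisers with vertex and edge groups is only up to conjugation in~$G$, and that this is harmless because~$\calF$ is closed under conjugation, is a worthwhile point of care.
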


Already for amalgamated products, neither of the two estimates in Corollary~\ref{cor:gog} is better than the other in general.
For example, we have $\cat_\TR(\IZ\ast \IZ)=1$ and $\cat_\FIN(\IZ/4\ast_{\IZ/2} \IZ/6)=1$.
Moreover, in general $\cat_\calF(G_1\ast_{G_0} G_2)$ is not bounded above by $\max\{\catres{\calF}{G_1},\catres{\calF}{G_2},\catres{\calF}{G_0}+1\}$ as shown by the example $\cat_\AME(F_2\ast_{C} F_2)=2$ (e.g., \cite[Example~7.8]{CLM22}), where~$F_2$ is the free group freely generated by~$a$ and~$b$, and~$C$ is the infinite cyclic group generated by the commutator~$[a,b]$.

\subsection*{Applications}
Let~$M$ be an oriented closed connected manifold.
The \emph{simplicial volume~$\|M\|$ of~$M$} is the infimum of $\ell^1$-norms of real fundamental cycles~\cite{Gromov82}.
One of the most widely applicable vanishing results for simplicial volume is Gromov's vanishing theorem.

\begin{thm}[{\cite[p.~41]{Gromov82}}]
\label{thm:sv_vanishing}
	Let~$M$ be an oriented closed connected manifold.
	If $\cat_\AME(M)<\dim(M)$, then $\|M\|=0$.
\end{thm}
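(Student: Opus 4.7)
The plan is to combine the duality between simplicial volume and bounded cohomology with Gromov's vanishing theorem for bounded cohomology associated to amenable open covers.

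First, I would unpack the hypothesis. From $\cat_\AME(M) < \dim(M)$ we obtain an open cover $U_0, \ldots, U_n$ of~$M$ with $n+1 \le \dim(M)$ such that for every~$i$ and every $y \in U_i$ the image $\im(\pi_1(U_i, y) \to \pi_1(M, y))$ is amenable. The multiplicity of this cover is bounded by its cardinality, hence is at most $\dim(M)$.

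Next, I would invoke Gromov's vanishing theorem in bounded cohomology: for every open cover of a space~$X$ with amenable $\pi_1$-image and multiplicity at most~$k$, the comparison map
\[
c \colon H^n_b(X; \IR) \to H^n(X; \IR)
\]
vanishes in every degree $n \ge k$. Applied to our cover, this forces $c = 0$ in degree $n = \dim(M)$.

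Finally, I would conclude using the duality principle. By Hahn--Banach one has
\[
\|M\| = \bigl\|[M]^*\bigr\|_\infty^{-1},
\]
where $[M]^* \in H^{\dim(M)}(M; \IR)$ is the cohomological fundamental class and $\|\cdot\|_\infty$ denotes the seminorm on $H^{\dim(M)}(M; \IR)$ induced from bounded cochains, with the conventions $\|\alpha\|_\infty = \infty$ if $\alpha \notin \im c$ and $1/\infty = 0$. Since $[M]^* \neq 0$ but $c$ vanishes in top degree, $[M]^*$ has no bounded representative, so $\|[M]^*\|_\infty = \infty$ and therefore $\|M\| = 0$.

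The technical heart of this plan is the bounded-cohomology vanishing statement in the second step, namely Gromov's diffusion-of-cocycles argument via amenable means on the nerve of the cover; in my plan I would cite it as a black box rather than reprove it. The duality principle is a standard Hahn--Banach argument, and the remaining steps are formal bookkeeping.
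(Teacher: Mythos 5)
Your proposal is correct and is the standard proof: combine the Hahn--Banach duality $\|M\| = \|[M]^*\|_\infty^{-1}$ with Gromov's vanishing theorem for bounded cohomology of spaces admitting amenable open covers of small multiplicity. The paper cites Theorem~\ref{thm:sv_vanishing} directly from Gromov rather than reproving it, but the proof of the relative version (Lemma~\ref{lem:sv_vanishing}) runs along exactly this route, namely the duality principle together with the degreewise vanishing of the comparison map $H_b^* \to H^*$, so your approach matches the one the authors have in mind.
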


As an application of Corollary~\ref{cor:gog} for the amenable category of graphs of groups, we obtain vanishing results for the simplicial volume of gluings of manifolds with boundary (Theorem~\ref{thm:cat_gluing}) and, in particular, for twisted doubles (Corollary~\ref{cor:cat_double}).
While most gluing estimates in the literature~\cite{Gromov82,BBFIPP14,Kue15,LLM22,Cap24} require amenability of the boundaries, our results apply to non-amenable boundaries as well.

The combination theorem for the amenable category of polygons of groups (Corollary~\ref{cor:polygon}) yields a vanishing result for the simplicial volume of cyclic branched coverings (Theorem~\ref{thm:cat_branching}).

We also deduce a combination theorem for topological complexity (Theorem~\ref{thm:TC}), generalising a result for amalgamated products of Dranishnikov--Sadykov~\cite{Dranishnikov-Sadykov19}.

\section{Combination of open covers via $G$-spaces}
We prove a combination theorem for the $\calF$-category (Theorem~\ref{thm:main}) that contains Theorem~\ref{thm:main_max} and Theorem~\ref{thm:main_sum} as special cases.
We work with an equivalent characterisation of the $\calF$-category phrased in terms of the dimension of $G$-CW-complexes with isotropy in~$\calF$.
A $G$-CW-complex \emph{has isotropy in~$\calF$} if all stabilisers of cells lie in~$\calF$. Then, instead of constructing open covers explicitly, we estimate the dimension of $G$-pushouts appearing in appropriate constructions of classifying spaces.

\begin{defn}
	Let~$G$ be a group and let~$\calF$ be a family of subgroups of~$G$.
	Let~$X$ be a $G$-CW-complex.
	We define~$\cat^G_\calF(X)$ as the infimum of~$n\in \IN$ for which there exist an $n$-dimensional $G$-CW-complex~$N$ with isotropy in~$\calF$ and a $G$-map $X\to N$.
\end{defn}

If there exists a $G$-map $X_1\to X_2$, then $\cat^G_\calF(X_1)\le \cat^G_\calF(X_2)$.
In particular, $\cat^G_\calF$ is invariant under $G$-homotopy equivalence.
The following basic properties are immediate.

\begin{lem}
\label{lem:catGF_basic}
	Let~$G$ be a group and let~$\calF$ be a family of subgroups of~$G$.
	\begin{enumerate}[label=\enum]
		\item Let~$(X_i)_{i\in I}$ be a collection of $G$-CW-complexes.
		Then 
		\[
			\cat^G_\calF\Bigl(\coprod_{i\in I} X_i\Bigr)=\sup_{i\in I}\cat^G_\calF(X_i);
		\]
		\item Let~$H$ be a subgroup of~$G$ and let~$Z$ be an $H$-CW-complex.
		Then 
		\[
            \pushQED{\qed}
			\cat^G_\calF(G\times _H Z)\le \cat^H_{\calF|_H}(Z).
            \qedhere
            \popQED
		\]
	\end{enumerate}
\end{lem}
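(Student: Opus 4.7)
Both statements should follow directly from the definition of $\cat^G_\calF$ by constructing the required $G$-CW-targets explicitly; neither should present any real obstacle, so the plan is mainly to identify the right construction in each case.

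For part~\enum, I would prove the two inequalities separately. For~$\ge$, any $G$-map $\coprod_{i\in I} X_i \to N$ with $N$ an $n$-dimensional $G$-CW-complex with isotropy in~$\calF$ restricts along each inclusion~$X_i\hookrightarrow \coprod_{i\in I} X_i$ to a $G$-map $X_i \to N$, witnessing $\cat^G_\calF(X_i)\le n$, and taking the supremum gives~$\ge$. For~$\le$, set $n\coloneqq \sup_{i\in I}\cat^G_\calF(X_i)$; if $n=\infty$ there is nothing to prove, so assume $n<\infty$ and choose, for each~$i$, an $n$-dimensional $G$-CW-complex~$N_i$ with isotropy in~$\calF$ together with a $G$-map $X_i\to N_i$. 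Then $\coprod_{i\in I} N_i$ is an $n$-dimensional $G$-CW-complex with isotropy in~$\calF$ (dimensions and stabilisers are preserved under disjoint union), and the $G$-maps assemble into a $G$-map $\coprod_{i\in I} X_i\to \coprod_{i\in I} N_i$, so $\cat^G_\calF(\coprod_{i\in I} X_i)\le n$.

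For part~\enum, set $n\coloneqq \cat^H_{\calF|_H}(Z)$, assume $n<\infty$ (otherwise nothing to prove), and choose an $n$-dimensional $H$-CW-complex~$M$ with isotropy in~$\calF|_H$ together with an $H$-map $f\colon Z\to M$. The plan is to take the $G$-CW-complex $G\times_H M$ as the target: applying the induction functor~$G\times_H(-)$ to~$f$ produces a $G$-map $G\times_H Z\to G\times_H M$, so it suffices to verify that $G\times_H M$ has dimension at most~$n$ and isotropy in~$\calF$. The first point is standard: the induced $G$-CW-structure on $G\times_H M$ has one equivariant $k$-cell $G\times_{H_\sigma} D^k$ for each $H$-equivariant $k$-cell~$H\times_{H_\sigma} D^k$ of~$M$, so $\dim(G\times_H M)=\dim(M)\le n$. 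The second point follows because the $G$-stabiliser of a point $[g,x]\in G\times_H M$ is conjugate in~$G$ to the $H$-stabiliser of~$x$; since the latter lies in~$\calF|_H\subset \calF$ and~$\calF$ is closed under conjugation, the $G$-stabiliser lies in~$\calF$ as required.

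The one point that warrants care, and is the closest thing to an obstacle, is checking that the induced $G$-CW-structure on $G\times_H M$ behaves well with respect to both dimension and stabilisers; but this is routine from the construction of induced $G$-CW-complexes and the definition of a family (conjugation-closed, subgroup-closed).
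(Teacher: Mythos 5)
Your proof is correct. The paper itself gives no argument here (it labels both statements ``immediate'' and places a \verb|\qed| in the statement), and the argument you supply is exactly the natural one that the authors are tacitly invoking: restrict along the inclusions and assemble targets by disjoint union for~(i); induce the target via~$G\times_H(-)$ and use that stabilisers of~$[g,x]$ are conjugates of~$H_x$ for~(ii). One small wording nit in part~(i): the~$N_i$ you choose will in general have dimension~$\cat^G_\calF(X_i)\le n$ rather than exactly~$n$, but since~$\dim\bigl(\coprod_i N_i\bigr)=\sup_i\dim(N_i)\le n$ this does not affect the conclusion.
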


The $\calF$-category of a space (Definition~\ref{defn:catF}) equals~$\cat^{\pi_1}_\calF$ of its universal covering.
This is proved using equivariant nerve maps and covers of simplicial complexes by open stars.

\begin{prop}[{\cite[Lemma~7.6]{CLM22}}]
\label{prop:CLM}
	Let~$Y$ be a connected CW-complex with fundamental group~$G$ and let~$\widetilde{Y}$ be the universal covering of~$Y$ (with the induced $G$-CW-structure).
	Let~$\calF$ be a family of subgroups of~$G$.
	Then
	\[
		\cat_\calF(Y)=\cat^G_\calF(\widetilde{Y}).
	\]
	In particular, $\cat_\calF(G)=\cat^G_\calF(EG)$.
\end{prop}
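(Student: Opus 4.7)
My plan is to prove the equality by establishing both inequalities, exploiting the standard dictionary between open covers of $Y$ with $\pi_1$-constraints and equivariant maps from $\widetilde{Y}$ to $G$-CW-complexes with prescribed isotropy.

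For $\cat^G_\calF(\widetilde{Y}) \le \cat_\calF(Y)$, I would start from an open cover $U_0, \ldots, U_n$ of $Y$ satisfying the $\pi_1$-constraint and lift it to the $G$-invariant open cover of $\widetilde{Y}$ whose members are the path components of the preimages $p^{-1}(U_i)$ under the universal covering projection $p\colon \widetilde{Y}\to Y$. By standard covering space theory, the $G$-stabiliser of each such component is conjugate in $G$ to the image $\im(\pi_1(U_i,y)\to \pi_1(Y,y))$ and hence lies in $\calF$. Form the equivariant nerve $N$ of this lifted cover: at most one path component of $p^{-1}(U_i)$ can contribute to any given simplex (since distinct components are disjoint), so $\dim N \le n$, and stabilisers of simplices are intersections of vertex stabilisers, which again lie in $\calF$ because $\calF$ is subgroup-closed. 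A $G$-invariant partition of unity subordinate to the cover then produces a $G$-equivariant nerve map $\widetilde{Y}\to N$, witnessing $\cat^G_\calF(\widetilde{Y})\le n$.

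For the reverse inequality, I would begin with a $G$-map $f\colon \widetilde{Y}\to N$ into an $n$-dimensional $G$-CW-complex with isotropy in $\calF$. After an equivariant subdivision I may assume that $N$ is a simplicial $G$-complex on which no element inverts a simplex. Since $\dim N\le n$, a standard combinatorial argument, applied on $G$-orbits of vertices, produces a $G$-invariant coloring of the vertex set with $n+1$ colors such that open stars of equally colored vertices are pairwise disjoint. The $G$-invariant open sets $V_i\subset N$ defined as the union of open stars of vertices of color $i$ then cover $N$, and each path component of $V_i$ is a single open star $\operatorname{st}(v)$. Setting $U_i\coloneqq p(f^{-1}(V_i))$ yields an open cover of $Y$ by $n+1$ sets, using that $f^{-1}(V_i)$ is $G$-invariant and $p$ is an open quotient. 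To verify the $\pi_1$-constraint, I observe that every path component of $p^{-1}(U_i)=f^{-1}(V_i)$ is mapped by $f$ into a single open star $\operatorname{st}(v)$; by $G$-equivariance of $f$, its $G$-stabiliser is contained in $G_v\in \calF$ and therefore itself lies in $\calF$, which via the covering-theoretic dictionary is exactly the image of $\pi_1(U_i,y)\to \pi_1(Y,y)$ up to conjugation.

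The main technical hurdle is the second direction: arranging $N$ to be simplicial enough to carry a $G$-invariant coloring with $\dim(N)+1$ color classes whose open stars are disjoint. The standard route is a sufficient barycentric subdivision to make the action simplicial and non-inverting, followed by a transfinite choice of colors on orbit representatives; this is precisely where the combinatorial bound on $\dim N$ is converted into the cardinality of the cover of $Y$. The addendum $\cat_\calF(G)=\cat^G_\calF(EG)$ then follows by applying the equality to $Y=BG$, whose universal covering is a model for $EG$.
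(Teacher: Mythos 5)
Your overall plan — equivariant nerve maps in one direction and covers by open stars in the other — matches the route the paper attributes to \cite{CLM22}, where the statement is proved (the paper itself gives no proof, only the citation and a one-line description). The first direction, $\cat^G_\calF(\widetilde Y)\le\cat_\calF(Y)$, is essentially correct: the nerve of the lifted cover has dimension at most $n$ because distinct components of the same $p^{-1}(U_i)$ are disjoint, simplex stabilisers are intersections of vertex stabilisers (a simplex stabiliser cannot permute its vertices since $G$ preserves each index $i$), and a $G$-invariant partition of unity is obtained by pulling one back from $Y$ and splitting it over components.

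The second direction contains a genuine gap. You claim that every $n$-dimensional simplicial $G$-complex, after passing to a non-inverting action, admits a $G$-invariant proper $(n+1)$-colouring of its vertex set via ``a standard combinatorial argument, applied on $G$-orbits of vertices''. This is false already without any group action: a $1$-dimensional simplicial complex may be the complete graph $K_m$, whose chromatic number is $m$, so its open stars cannot be partitioned into two colour classes with pairwise disjoint stars. Dimension does not bound the chromatic number of the $1$-skeleton, and no greedy or transfinite choice will produce $n+1$ classes in general. The correct device is the barycentric subdivision $\operatorname{sd}(N)$: colour the barycentre $\hat\sigma$ of a simplex $\sigma$ of $N$ by $\dim(\sigma)\in\{0,\dots,n\}$. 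Adjacent barycentres correspond to comparable, hence unequally dimensional, simplices, so this colouring is proper; it uses exactly $n+1$ colours; it is automatically $G$-invariant since $G$ preserves dimension; and one barycentric subdivision already makes the action admissible because the vertices of any simplex of $\operatorname{sd}(N)$ have pairwise distinct dimensions. With this canonical colouring replacing your ad hoc one — and the passage from a $G$-CW-complex to a $G$-simplicial complex of the same dimension either justified or sidestepped by building the cover of $N$ directly cell by cell — the remainder of your second direction (pushing $f^{-1}(V_i)$ down along $p$ and reading off $\pi_1$-images from component stabilisers) goes through.
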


The next lemma is crucial.
It provides two different estimates for~$\cat^G_\calF$ of certain $G$-pushouts.

\begin{lem}
\label{lem:key}
	Let~$G$ be a group and let~$\calF$ be a family of subgroups of~$G$.
	Let $n\in \IN_{\ge 1}$.
	Consider a $G$-pushout of~$G$-CW-complexes with isotropy in~$\calF$ of the form
	\[\begin{tikzcd}
		X_0\times S^{n-1}\ar{r}{f}\ar[hook]{d}
		& X_1\ar{d}
		\\
		X_0\times D^n\ar{r}
		& X
	\end{tikzcd}\]
	where~$f$ is a $G$-map.
	Then the following hold:
	\begin{enumerate}[label=\enum]
		\item\label{item:key_max} 
		$\cat^G_\calF(X)\le \max\{\cat^G_\calF(X_1), \dim(X_0)+n\}$;
		\item\label{item:key_sum}
		$\cat^G_\calF(X)\le \cat^G_\calF(X_1)+\cat^G_\calF(X_0)+1$.
	\end{enumerate}
	\begin{proof}
		For~$i\in\{0,1\}$, let~$\nu_i\colon X_i\to N_i$ be a $G$-map, where~$N_i$ is a $G$-CW-complex with isotropy in~$\calF$. 
		In both cases~(i) and~(ii), we construct a suitable $G$-CW-complex~$N$ with isotropy in~$\calF$ as a $G$-pushout. 
		Then the universal property of the $G$-pushout~$X$ yields a $G$-map~$\nu\colon X\to N$, showing that $\cat^G_\calF (X) \leq \dim (N)$.
		
		(i) We consider the following commutative diagram, where $N$ is defined as the pushout of the outer diagram and $\nu \colon X \to N$ is obtained from the inner pushout:
		\[\begin{tikzcd}
			X_0\times S^{n-1}\ar{rrr}{\nu_1\circ f}\ar[hook]{ddd}
			&&& N_1\ar[dashed]{ddd}
			\\
			& X_0\times S^{n-1}\ar{r}{f}\ar[hook]{d}\ar{ul}{\id}
			& X_1\ar{d}\ar{ur}[swap]{\nu_1}
			\\
			& X_0\times D_n\ar{r}\ar{dl}[swap]{\id}
			& X\ar[dashed]{dr}{\nu}
			\\
			X_0\times D^n\ar[dashed]{rrr}
			&&& N
		\end{tikzcd}\]
		By equivariant cellular approximation, we may assume that the $G$-maps~$f$ and~$\nu_1$ are cellular, so that~$N$ inherits the structure of a $G$-CW-complex.
		By construction, the isotropy groups of~$N$ are contained in~$\calF$ and
		we have 
		\[
			\dim(N)=\max\{\dim(N_1), \dim(X_0)+n\}.
		\]
		
		(ii) Let~$\Cyl(\pr_{N_0})$ be the mapping cylinder of the projection $\pr_{N_0}\colon N_0\times N_1\to N_0$ (with the induced $G$-CW-structure).
		We consider the commutative diagram, where the $G$-CW-complex~$N$ is defined as the pushout of the outer diagram and $\nu \colon X \to N$ is obtained from the inner pushout:
		\[\begin{tikzcd}
			N_0\times N_1\ar{rrr}{\pr_{N_1}}\ar[hook]{ddd}
			&&& N_1\ar[dashed]{ddd}
			\\
			& X_0\times S^{n-1}\ar{r}{f}\ar[hook]{d}\ar{ul}{\varphi}
			& X_1\ar{d}\ar{ur}[swap]{\nu_1}
			\\
			& X_0\times D_n\ar{r}\ar{dl}[swap]{\Phi}
			& X\ar[dashed]{dr}{\nu}
			\\
			\Cyl(\pr_{N_0})\ar[dashed]{rrr}
			&&& N
		\end{tikzcd}\]
        Here $\varphi\coloneqq (\nu_0\circ \pr_{X_0})\times (\nu_1\circ f)$ and the $G$-map~$\Phi$ is induced by the inner pushout of the commutative diagram
		\[\begin{tikzcd}
			N_0\times N_1\times \{1\}\ar{rrr}{\pr_{N_0}}\ar[hook]{ddd}
			&&& N_0\ar{ddd}
			\\
			& X_0\times S^{n-1}\times \{1\}\ar{r}{\pr_{X_0}}\ar[hook]{d}\ar{ul}{\varphi}
			& X_0\ar{d}\ar{ur}[swap]{\nu_0}
			\\
			& X_0\times S^{n-1}\times [0,1]\ar{r}\ar{dl}[swap]{\varphi\times \id_{[0,1]}}
			& X_0\times D^n\ar[dashed]{dr}{\Phi}
			\\
			N_0\times N_1\times [0,1]\ar{rrr}
			&&& \Cyl(\pr_{N_0})
		\end{tikzcd}\]
		By construction, the isotropy groups of~$N$ are contained in~$\calF$ and we have
		\[
			\dim(N)= \dim(N_1)+\dim(N_0)+1.
		\]
		This finishes the proof.
	\end{proof}
\end{lem}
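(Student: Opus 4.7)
The plan is, in both parts, to choose $G$-CW-complexes $N_0, N_1$ with isotropy in~$\calF$ together with $G$-maps $\nu_i \colon X_i \to N_i$ realising $\dim N_i = \cat^G_\calF(X_i)$ (assuming both quantities are finite, for otherwise the conclusion is vacuous), and then to build a single $G$-CW-complex $N$ with isotropy in~$\calF$ together with a $G$-map $X \to N$, in such a way that $\dim N$ meets the required upper bound. The map $X \to N$ will be produced from the universal property of the $G$-pushout defining $X$, so the task reduces to assembling $N$ from $N_0$ and $N_1$ in a compatible fashion. After applying equivariant cellular approximation, all $G$-maps involved may be assumed cellular, so that the pushouts we construct inherit natural $G$-CW-structures with isotropy in~$\calF$.

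For part~\ref{item:key_max}, the most economical construction is to keep the factor $X_0 \times D^n$ of the given pushout and only replace $X_1$ by $N_1$. Concretely, I would form $N$ as the $G$-pushout of the span $X_0 \times D^n \hookleftarrow X_0 \times S^{n-1} \xrightarrow{\nu_1 \circ f} N_1$. The resulting $N$ has dimension $\max\{\dim N_1, \dim X_0 + n\}$, and the required $G$-map $X \to N$ is immediate from the universal property of $X$.

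For part~\ref{item:key_sum}, the naive product $N_0 \times N_1$ of dimension $\dim N_0 + \dim N_1$ is not enough: the maps $\nu_0 \circ \pr_{X_0}$ and $\nu_1 \circ f$ from $X_0 \times S^{n-1}$ do not agree target-wise, so one needs an extra degree of freedom to interpolate between them. I would resolve this by introducing a mapping cylinder and taking $N$ to be the $G$-pushout of $\Cyl(\pr_{N_0} \colon N_0 \times N_1 \to N_0) \hookleftarrow N_0 \times N_1 \xrightarrow{\pr_{N_1}} N_1$, whose dimension is $\dim N_0 + \dim N_1 + 1$. To construct the required $G$-map $X_0 \times D^n \to \Cyl(\pr_{N_0})$ extending $(\nu_0 \circ \pr_{X_0}) \times (\nu_1 \circ f)$ along the inclusion $X_0 \times S^{n-1} \hookrightarrow X_0 \times D^n$, I would exploit the homeomorphism $X_0 \times D^n \cong \Cyl(\pr_{X_0} \colon X_0 \times S^{n-1} \to X_0)$ and build the extension via a secondary pushout whose $N_0$-branch is supplied by $\nu_0$.

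The main technical point is to check, in part~\ref{item:key_sum}, that the two resulting routes from $X_0 \times S^{n-1}$ into $N$ --- via $X_1$ and $\nu_1$, or via $X_0 \times D^n$ and the mapping cylinder construction --- coincide, so that the universal property of the $G$-pushout $X$ really does yield a well-defined $G$-map $X \to N$. Once traced through the definitions, this will reduce to the naturality of mapping cylinders and the fact that $N_0 \times N_1$ sits inside $\Cyl(\pr_{N_0})$ in the natural way, from which the two branches emanate.
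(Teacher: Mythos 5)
Your construction is identical to the paper's: part~(i) uses the pushout of $X_0\times D^n \hookleftarrow X_0\times S^{n-1}\xrightarrow{\nu_1\circ f} N_1$, and part~(ii) uses the pushout of $\Cyl(\pr_{N_0})\hookleftarrow N_0\times N_1\xrightarrow{\pr_{N_1}} N_1$, with the map $X_0\times D^n\to\Cyl(\pr_{N_0})$ built from the cylinder identification $X_0\times D^n\cong\Cyl(\pr_{X_0})$ exactly as in the paper's auxiliary pushout diagram. The compatibility check you flag at the end is precisely what the paper's large commutative diagram in part~(ii) records, so the argument is complete and matches the paper's route.
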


We will use a well-known ``blow-up" construction~\cite[Proof of Theorem~3.1]{Lueck00}, see also~\cite[Theorem~2.3]{MPSS20}.

\begin{thm}[{\cite{Lueck00}}]
\label{thm:blow-up}
	Let~$G$ be a group and let~$X$ be a $G$-CW-complex.
	For every~$i\in \IN$, let~$\Sigma_i$ be a set of $G$-orbit representatives of all $i$-cells of~$X$.
	For every~$\sigma\in \Sigma_i$, we fix a model for~$EG_\sigma$.
	Then there exists a free $G$-CW-complex~$\widehat{X}$ and a $G$-map $p\colon \widehat{X}\to X$ satisfying the following:
	\begin{enumerate}[label=\enum]
		\item\label{item:blow-up_pushout} 
		$\widehat{X}$ admits a filtration~$(\widehat{X}_i)_{i\in \IN}$ such that there exist $G$-pushouts of the form
		\[\begin{tikzcd}
			\coprod_{\sigma\in \Sigma_i} G\times_{G_\sigma} EG_\sigma\times S^{i-1}\ar{r}\ar{d}
			& \widehat{X}_{i-1}\ar{d}
			\\
			\coprod_{\sigma\in \Sigma_i} G\times_{G_\sigma} EG_\sigma\times D^i\ar{r}
			& \widehat{X}_i
		\end{tikzcd}\]
		In particular, $\widehat{X}_0=\coprod_{v\in \Sigma_0} G\times_{G_v} EG_v\times D^0$;
		\item $p$ is a (non-equivariant) homotopy equivalence;
		\item $\widehat{X}$ is $G$-homotopy equivalent to~$EG\times X$ equipped with the diagonal $G$-action.
	\end{enumerate}
\end{thm}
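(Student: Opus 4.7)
The plan is to construct $\widehat{X}$ inductively along the skeletal filtration of $X$, producing at each step a $G$-map $p_i\colon \widehat{X}_i\to X_i$ that is a non-equivariant homotopy equivalence, and then to compare the resulting free $G$-CW-complex with $EG\times X$.

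For the base case, set $\widehat{X}_0\coloneqq \coprod_{v\in \Sigma_0} G\times_{G_v} EG_v\times D^0$, together with the $G$-map $p_0\colon \widehat{X}_0\to X_0$ collapsing each $EG_v$-factor. For the inductive step, assume $\widehat{X}_{i-1}$ and $p_{i-1}$ are constructed. By the standard description of $X_i$ as a $G$-pushout, each orbit of $i$-cells corresponds to some $\sigma\in \Sigma_i$ with a $G_\sigma$-equivariant attaching map $q_\sigma\colon S^{i-1}\to X_{i-1}$. To realise the $G$-pushouts required by~(i), I need to produce for each $\sigma$ a $G_\sigma$-equivariant lift
\[
\widehat{q}_\sigma\colon EG_\sigma\times S^{i-1}\to \widehat{X}_{i-1}
\]
of $q_\sigma\circ \pr_{S^{i-1}}$ along $p_{i-1}$. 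Taking coproducts over $\sigma\in\Sigma_i$ of the induced $G$-maps $G\times_{G_\sigma}EG_\sigma\times S^{i-1}\to\widehat{X}_{i-1}$ and forming the $G$-pushout with $\coprod_{\sigma\in\Sigma_i} G\times_{G_\sigma}EG_\sigma\times D^i$ then yields $\widehat{X}_i$ and the extension $p_i$.

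The main technical point, and the step I expect to be the actual obstacle, is the existence of the equivariant lift $\widehat{q}_\sigma$. Here I would invoke equivariant obstruction theory: both $EG_\sigma\times S^{i-1}$ and $\widehat{X}_{i-1}$ are free $G_\sigma$-CW-complexes (the latter because $\widehat{X}_{i-1}$ is free as a $G$-CW-complex by construction). Building $\widehat{q}_\sigma$ skeletally over a free $G_\sigma$-CW structure on $EG_\sigma\times S^{i-1}$ reduces to non-equivariant lifting problems along $p_{i-1}$, whose obstructions vanish because $p_{i-1}$ is inductively a non-equivariant weak equivalence. Strict commutativity of the square (as opposed to commutativity up to $G_\sigma$-homotopy) can be arranged afterwards using the cofibration $EG_\sigma\times S^{i-1}\hookrightarrow EG_\sigma\times D^i$ and the homotopy extension property, or by first replacing $\widehat{X}_{i-1}$ with the $G$-mapping cylinder of $p_{i-1}$.

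Once the construction is complete, property~(ii) follows by induction on $i$ via the gluing lemma for homotopy equivalences along cofibrations, since the projections $EG_\sigma\times D^i\to D^i$ and $EG_\sigma\times S^{i-1}\to S^{i-1}$ are homotopy equivalences ($EG_\sigma$ being contractible). For property~(iii), observe that $\widehat{X}$ is free by construction, while $EG\times X$ is free because the first factor carries a free $G$-action. The universal property of $EG$ supplies a $G$-equivariant map $\widehat{X}\to EG$, which together with $p$ assembles into a $G$-map $\widehat{X}\to EG\times X$. This map is a non-equivariant homotopy equivalence (its composition with the projection onto $X$ equals $p$, and the projection $EG\times X\to X$ is a homotopy equivalence since $EG$ is contractible), and hence a $G$-homotopy equivalence between free $G$-CW-complexes by the equivariant Whitehead theorem.
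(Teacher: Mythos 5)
The paper does not prove this statement: it cites Lück's paper (and the survey by Martínez-Pérez, Saorín, et~al.) and uses the result as a black box. So there is no ``paper proof'' to compare against; I am comparing your argument to the standard proof in the cited reference.

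Your construction is essentially Lück's: build $\widehat{X}$ inductively over the skeletal filtration of $X$ by attaching thickened cells $G\times_{G_\sigma}(EG_\sigma\times D^i)$, where the key step is to produce a $G_\sigma$-equivariant lift $\widehat{q}_\sigma\colon EG_\sigma\times S^{i-1}\to\widehat{X}_{i-1}$ of $q_\sigma\circ\pr$ along $p_{i-1}$. Your identification of the crucial input -- the source is a \emph{free} $G_\sigma$-CW-complex and $p_{i-1}$ is a \emph{non-equivariant} weak equivalence, so the cell-by-cell lifting obstructions vanish -- is exactly right, and this is what makes the argument work without any hypothesis on the fixed-point sets of $p_{i-1}$. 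The derivations of (ii) via the gluing lemma along cofibrations and of (iii) via the universal property of $EG$ together with the equivariant Whitehead theorem for free $G$-CW-complexes are also correct and are the standard steps.

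One technical point deserves more care than you give it. Of your two suggested fixes for promoting the homotopy-commutative lifting square to a strictly commutative one, the mapping-cylinder variant as stated does not work: $\Cyl(p_{i-1})$ contains $X_{i-1}$ and hence has non-trivial isotropy, so replacing $\widehat{X}_{i-1}$ by it would break the freeness that the inductive lifting step relies on. The HEP route is the right one, but it should be understood as deforming the \emph{characteristic} map $EG_\sigma\times D^i\to D^i\to X_i$ (not the attaching map $\widehat q_\sigma$) so that it restricts to $p_{i-1}\circ\widehat{q}_\sigma$ on the boundary; this provides a well-defined extension $p_i$, and for the gluing lemma in (ii) one additionally records that the deformed characteristic maps remain homotopy equivalences. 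Alternatively one can factor $p_{i-1}$ through the $G$-mapping path space, which preserves freeness and yields a $G$-fibration, to obtain strict lifts. These are minor adjustments; the overall architecture of your proof is sound and matches the reference.
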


If~$X$ is contractible, then~$\widehat{X}$ is a model for~$EG$. 
Hence Theorem~\ref{thm:combination_gd} is an immediate consequence of Theorem~\ref{thm:blow-up}.

The following is our main result.
For each dimension~$i\in \IN_{\ge 1}$, we apply either Lemma~\ref{lem:key}~\ref{item:key_max} or~\ref{item:key_sum} to the $G$-pushout in Theorem~\ref{thm:blow-up}~\ref{item:blow-up_pushout}.

\begin{thm}
\label{thm:main}
	Let~$G$ be a group and let~$\calF$ be a family of subgroups of~$G$.
	Let~$X$ be a $G$-CW-complex.
	For every~$i\in \IN$, let~$\Sigma_i$ be a set of $G$-orbit representatives of all $i$-cells of~$X$.
	Let~$I$ be a subset of~$\IN_{\ge 1}$.
	For~$i\in \IN_{\ge 1}$, we set
	\begin{align*}
		d_0
		&\coloneqq \sup_{v\in \Sigma_0}\catres{\calF}{G_v};
		\\
		d_i
		&\coloneqq \begin{cases}
			\max\bigl\{d_{i-1}, \, \sup_{\sigma\in \Sigma_i}\bigl(\gd(G_\sigma)+i\bigr)\bigr\}
			& \text{if } i\in I;
			\\
			d_{i-1}+\sup_{\sigma\in \Sigma_i}\bigl(\catres{\calF}{G_\sigma}+1\bigr)
			& \text{if } i\notin I.
		\end{cases}
	\end{align*}
	If~$X$ is $n$-dimensional,
	then $\cat^G_\calF(EG\times X)\le d_n$.

        In particular, if~$X$ is $n$-dimensional and (non-equivariantly) contractible, 
        then $\cat_\calF (G) \leq d_n$.
	\begin{proof}
		For every~$i\in \IN$ and every $\sigma\in \Sigma_i$, we fix a model for~$EG_\sigma$ of dimension~$\gd(G_\sigma)$.
		Theorem~\ref{thm:blow-up} yields a free $G$-CW-complex~$\widehat{X}$ that is $G$-homotopy equivalent to~$EG\times X$.
		We proceed by induction over the filtration~$(\widehat{X}_i)_{i\in \IN}$ of~$\widehat{X}$.
		Since $\widehat{X}_0=\coprod_{v\in \Sigma_0} G\times_{G_v} EG_v\times D^0$, we have $\cat^G_\calF(\widehat{X}_0)\le d_0$ by Lemma~\ref{lem:catGF_basic}.
		For every~$i\in \IN_{\ge 1}$, there exists a $G$-pushout
		\[\begin{tikzcd}
			\coprod_{\sigma\in \Sigma_i} G\times_{G_\sigma} EG_\sigma\times S^{i-1}\ar{r}\ar{d}
			& \widehat{X}_{i-1}\ar{d}
			\\
			\coprod_{\sigma\in \Sigma_i} G\times_{G_\sigma} EG_\sigma\times D^i\ar{r}
			& \widehat{X}_i
		\end{tikzcd}\]
		By induction, we have $\cat^G_\calF(\widehat{X}_{i-1})\le d_{i-1}$.
		If~$i\in I$, Lemma~\ref{lem:key}~\ref{item:key_max} yields
		\[
			\cat^G_\calF(\widehat{X}_i)\le 
			\max\Bigl\{d_{i-1}, \sup_{\sigma\in \Sigma_i}\bigl(\gd(G_\sigma)+i\bigr)\Bigr\} = d_i.
		\]
		If~$i\notin I$, Lemma~\ref{lem:key}~\ref{item:key_sum} yields
		\[
			\cat^G_\calF(\widehat{X}_i)\le
			d_{i-1}+\sup_{\sigma\in \Sigma_i}\bigl(\catres{\calF}{G_\sigma}+1\bigr) = d_i.
		\]
		If $X$ is contractible, then $\widehat X$ is a model of~$EG$ and 
            hence $\cat_\calF (G) = \cat^G_\calF (\widehat X)$.
	\end{proof}
\end{thm}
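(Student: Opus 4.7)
The plan is to reduce the theorem to an inductive computation along the filtration produced by the blow-up construction (Theorem~\ref{thm:blow-up}). Applying that construction to~$X$, after fixing models~$EG_\sigma$ of dimension~$\gd(G_\sigma)$, yields a free $G$-CW-complex~$\widehat{X}$ that is $G$-homotopy equivalent to~$EG\times X$, together with a filtration $(\widehat{X}_i)_{i\in\IN}$ whose successive stages fit into explicit $G$-pushouts. Since $\cat^G_\calF$ is a $G$-homotopy invariant and $\widehat{X}=\widehat{X}_n$ (because $X$ is $n$-dimensional), the task reduces to proving $\cat^G_\calF(\widehat{X}_i)\le d_i$ for $i\in\{0,\ldots,n\}$ by induction on~$i$.

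For the base case, $\widehat{X}_0=\coprod_{v\in \Sigma_0} G\times_{G_v} EG_v$, so Lemma~\ref{lem:catGF_basic} together with Proposition~\ref{prop:CLM} immediately delivers
\[
    \cat^G_\calF(\widehat{X}_0)\le \sup_{v\in \Sigma_0}\cat^{G_v}_{\calF|_{G_v}}(EG_v)=\sup_{v\in \Sigma_0}\catres{\calF}{G_v}=d_0.
\]
For the inductive step, I would observe that the $G$-pushout from Theorem~\ref{thm:blow-up}~\ref{item:blow-up_pushout} is of exactly the shape required by Lemma~\ref{lem:key}, with $X_1=\widehat{X}_{i-1}$ and $X_0=\coprod_{\sigma\in \Sigma_i} G\times_{G_\sigma} EG_\sigma$. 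By the choice of models, $\dim(X_0)=\sup_{\sigma\in \Sigma_i}\gd(G_\sigma)$, and another application of Lemma~\ref{lem:catGF_basic} and Proposition~\ref{prop:CLM} gives $\cat^G_\calF(X_0)\le\sup_{\sigma\in \Sigma_i}\catres{\calF}{G_\sigma}$. Combining the inductive hypothesis $\cat^G_\calF(\widehat{X}_{i-1})\le d_{i-1}$ with Lemma~\ref{lem:key}~\ref{item:key_max} when $i\in I$ reproduces the ``max''-branch of the recursion for $d_i$, and with Lemma~\ref{lem:key}~\ref{item:key_sum} when $i\notin I$ reproduces the ``sum''-branch; either way we obtain $\cat^G_\calF(\widehat{X}_i)\le d_i$.

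For the final clause, if $X$ is contractible then $\widehat{X}\simeq EG\times X$ is also contractible, and because $\widehat{X}$ is free it is a model for~$EG$; Proposition~\ref{prop:CLM} then yields $\cat_\calF(G)=\cat^G_\calF(\widehat{X})\le d_n$. The genuinely delicate point has already been absorbed into Lemma~\ref{lem:key}: its two alternative constructions of the approximating $G$-CW-complex~$N$ are what make both estimates simultaneously available at each filtration level, allowing the subset~$I$ to be chosen freely. What is left for the proof of Theorem~\ref{thm:main} is essentially bookkeeping — matching the indices of the blow-up pushout to those of Lemma~\ref{lem:key} and noting that the isotropy groups appearing in $X_0$ are trivial (and hence in~$\calF$) because $\widehat{X}$ is free.
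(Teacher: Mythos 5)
Your proposal follows the paper's proof essentially verbatim: apply the blow-up construction (Theorem~\ref{thm:blow-up}), induct along the resulting filtration, and invoke Lemma~\ref{lem:key}~\ref{item:key_max} or~\ref{item:key_sum} at stage~$i$ according to whether $i\in I$. You spell out a couple of intermediate steps that the paper leaves implicit---invoking Proposition~\ref{prop:CLM} to convert $\cat^{G_v}_{\calF|_{G_v}}(EG_v)$ into $\catres{\calF}{G_v}$, and noting that the isotropy-in-$\calF$ hypothesis of Lemma~\ref{lem:key} is satisfied because $\widehat{X}$ is free---but these are clarifications of the same argument rather than a different route.
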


\begin{proof}[Proof of Theorem~\ref{thm:main_max}]
	This follows from Theorem~\ref{thm:main} for~$I=\IN_{\ge 1}$.
\end{proof}

\begin{proof}[Proof of Theorem~\ref{thm:main_sum}]
	This follows from Theorem~\ref{thm:main} for~$I=\emptyset$.
\end{proof}

\section{Simplicial volume of gluings}
We prove vanishing results for the simplicial volume of gluings of manifolds with boundary by applying our combination theorems to the amenable category. 
The category estimates below are not specific to the amenable family, but in view of the applications to simplicial volume, we treat only this case. 
For background on simplicial volume, we refer the reader to the literature~\cite{Gromov82,Frigerio17}.

\begin{defn}
	Let~$(M,\partial M)$ be an oriented compact connected $n$-manifold with (possibly empty) boundary.
	The \emph{simplicial volume~$\|M,\partial M\|\in \IR_{\ge 0}$ of~$(M,\partial M)$} is defined as
	\[
		\|M,\partial M\|\coloneqq \inf \bigl\{|c|_1 \bigm\vert c\in C_n(M;\IR) \text{ is a relative fundamental cycle of~$(M,\partial M)$}\bigr\}.	
	\]
	Here the $\ell^1$-norm of a singular chain $c=\sum_\sigma a_\sigma\cdot \sigma\in C_n(M;\IR)$
	is defined as $|c|_1\coloneqq \sum_\sigma |a_\sigma|$.
\end{defn}

The simplicial volume is zero in the presence of small amenable covers.
The case of closed manifolds is Theorem~\ref{thm:sv_vanishing}. We extend the argument to the relative case.  
            
\begin{lem}
\label{lem:sv_vanishing}
	Let~$(M,\partial M)$ be an oriented compact connected $n$-manifold.
	Suppose that the following hold:
	\begin{enumerate}[label=\enum]
		\item Every boundary component of~$M$ is $\pi_1$-injective in~$M$;
		\item Every boundary component of~$M$ has fundamental group of geometric dimension~$\le n-2$;
		\item $\cat_\AME(\pi_1(M))\le n-1$.
	\end{enumerate}
	Then $\|M,\partial M\|=0$.
	\begin{proof} 
		By the duality principle~\cite[Proposition~7.10]{Frigerio17}, the simplicial volume of~$(M,\partial M)$ is zero if and only if the comparison map in degree~$n$
		\[
			H_b^n(M,\partial M)\to H^n(M,\partial M)
		\]
		vanishes.
		Here~$H_b^*$ and~$H^*$ denote bounded cohomology and singular cohomology with real coefficients, respectively.
		Let~$S_1, \ldots, S_l$ be the boundary components of~$M$.
		Let~$\pi_1(\calS)$ denote the collection $(\pi_1(S_i))_i$ of subgroups of~$\pi_1(M)$.
		By the mapping theorem~\cite[Theorem~5.9]{Frigerio17} and the five lemma, it suffices to show that the comparison map for the group pair~$(\pi_1(M),\pi_1(\calS))$ vanishes in degree~$n$.
		We consider the following diagram
		\[\begin{tikzcd}
			\hspace{4ex} \prod_{i=1}^l H_b^{n-1}(\pi_1(S_i))\ar{r}\ar{d}
			& H_b^n(\pi_1(M),\pi_1(\calS))\ar{r}\ar{d}
			& H_b^n(\pi_1(M))\ \ar{d}{0}
			\\
			0=\prod_{i=1}^l H^{n-1}(\pi_1(S_i))\ar{r}
			& H^n(\pi_1(M),\pi_1(\calS))\ar{r}
			& H^n(\pi_1(M))
		\end{tikzcd}\]
		where the rows are portions of the long exact sequences of pairs and the vertical maps are comparison maps.
		Since $\gd(\pi_1(S_i))\le n-2$ for all~$i\in \{1,\ldots,l\}$, the lower left module is trivial.
		Since $\cat_\AME(\pi_1(M))\le n-1$, the right vertical map vanishes by the vanishing theorem~\cite[p.~40]{Gromov82}.
		It follows that the middle vertical map vanishes, as desired.
	\end{proof}
\end{lem}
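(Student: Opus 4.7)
The plan is to reduce the vanishing of $\|M,\partial M\|$ to the vanishing of a comparison map in bounded cohomology, and then to kill that map by a short diagram chase using the three hypotheses.

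First, I would invoke the duality principle for relative simplicial volume: $\|M,\partial M\|=0$ if and only if the comparison map $c^n \colon H_b^n(M,\partial M)\to H^n(M,\partial M)$ vanishes in the top degree. This is the relative analogue of the standard criterion used in the proof of Theorem~\ref{thm:sv_vanishing}. So it suffices to show $c^n=0$.

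Next, I would pass from the manifold pair to a group pair. Let $S_1,\dots,S_l$ be the boundary components; the $\pi_1$-injectivity assumption (i) ensures that each $\pi_1(S_j)$ embeds in $\pi_1(M)$ and that the relevant version of Gromov's mapping theorem applies to the pair $(M,\partial M)$. Combined with the five lemma, this lets me replace the manifold-level comparison map by the comparison map $c_G^n \colon H_b^n(\pi_1(M),\pi_1(\calS))\to H^n(\pi_1(M),\pi_1(\calS))$ for the group pair, where $\pi_1(\calS)=(\pi_1(S_j))_{j=1}^l$. It is enough to prove $c_G^n=0$.

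Now I would run a diagram chase on the long exact sequences of the pair $(\pi_1(M),\pi_1(\calS))$ in degrees $n-1,n$, connected by the comparison maps, using the schematic
\[
\begin{tikzcd}[column sep=small]
\prod_j H_b^{n-1}(\pi_1(S_j))\ar{r}\ar{d}
& H_b^n(\pi_1(M),\pi_1(\calS))\ar{r}\ar{d}{c_G^n}
& H_b^n(\pi_1(M))\ar{d}{c^n_{\pi_1(M)}}
\\
\prod_j H^{n-1}(\pi_1(S_j))\ar{r}
& H^n(\pi_1(M),\pi_1(\calS))\ar{r}
& H^n(\pi_1(M)).
\end{tikzcd}
\]
Hypothesis (ii) gives $\gd(\pi_1(S_j))\le n-2$, so the lower-left term vanishes and the bottom map into $H^n(\pi_1(M),\pi_1(\calS))$ is injective. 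Hypothesis (iii), together with Gromov's vanishing theorem (the absolute version used in Theorem~\ref{thm:sv_vanishing}), forces the right vertical comparison map to vanish. A quick chase then shows $c_G^n=0$, which is what we wanted.

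The main obstacle is the first reduction step: using the mapping theorem in the relative form and verifying that the $\pi_1$-injectivity hypothesis is exactly what makes the five-lemma argument go through cleanly. Once that is in place, the geometric dimension bound kills the singular side of the boundary contribution and the amenable category bound kills the bounded side of the top cell, so the diagram chase is essentially forced.
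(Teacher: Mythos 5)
Your proposal follows the paper's proof essentially step for step: reduce to the vanishing of the relative comparison map via the duality principle, pass to the group pair $(\pi_1(M),\pi_1(\calS))$ via the mapping theorem plus the five lemma (this is where $\pi_1$-injectivity enters), and then chase the same ladder of long exact sequences, killing the lower-left corner by the geometric-dimension bound and the right vertical by Gromov's vanishing theorem. The only stylistic addition is that you explicitly point out the injectivity of the bottom map $H^n(\pi_1(M),\pi_1(\calS))\to H^n(\pi_1(M))$, which the paper leaves implicit.
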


We fix some notation for gluings of (topological) manifolds with boundary.

\begin{setup}
\label{setup:gluing}
	Let~$n, k , l \in \IN$.
	\leavevmode
	\begin{itemize}
		\item Let~$M_1, \ldots, M_k$ be oriented compact connected $n$-manifolds.
		\item We fix pairings~$(S_1^+,S_1^-), \ldots, (S_l^+,S_l^-)$ of some boundary components of the~$(M_j)_j$ such that every boundary component of the~$(M_j)_j$ occurs at most once among the~$(S_i^\pm)_i$.
		\item For all~$i\in \{1,\ldots,l\}$, let~$j^\pm(i)\in \{1,\ldots,k\}$ be the above index such that $S_i^\pm$ is a boundary component of~$M_{j^\pm(i)}$.
		\item For all~$i\in \{1,\ldots,l\}$, let~$f_i\colon S_i^+\to S_i^-$ be an orientation-reversing homeomorphism.
		\item Let~$M$ be the oriented compact $n$-manifold obtained by gluing~$M_1, \ldots, M_k$ along~$f_1, \ldots, f_l$.
	We assume that~$M$ is connected.
	\end{itemize}
\end{setup}

If all boundary components are amenable, then the simplicial volume is subadditive.
In particular, the simplicial volume of the gluing is zero if the simplicial volume of all pieces is zero.
\begin{thm}[{\cite{Gromov82,BBFIPP14}}]
\label{thm:sv_additivity}
	In the situation of Setup~\ref{setup:gluing},
	suppose that every boundary component of the~$(M_j)_j$ has amenable fundamental group. 
	Then
	\[
		\|M,\partial M\|\le \sum_{j=1}^k \|M_j,\partial M_j\|.
	\]
	In particular, if $\|M_j,\partial M_j\|=0$ for all~$j\in \{1,\ldots,k\}$, then $\|M,\partial M\|=0$.
\end{thm}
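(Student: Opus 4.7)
The plan is to construct a near-optimal relative fundamental cycle of~$(M,\partial M)$ by assembling near-optimal relative fundamental cycles of the individual pieces~$(M_j,\partial M_j)$, and to absorb the resulting error terms on the glued hypersurfaces by exploiting amenability of their fundamental groups. Fix $\varepsilon>0$. For each $j\in\{1,\ldots,k\}$, I would choose a relative fundamental cycle $c_j\in C_n(M_j;\IR)$ with $|c_j|_1\le \|M_j,\partial M_j\|+\varepsilon/(2k)$ and push it forward via the inclusion $\iota_j\colon M_j\hookrightarrow M$ to obtain $c\coloneqq \sum_{j=1}^k (\iota_j)_\ast c_j\in C_n(M;\IR)$, which satisfies $|c|_1\le \sum_j\|M_j,\partial M_j\|+\varepsilon/2$.

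Next I would analyze the boundary $\partial c$. It decomposes as $c_\partial+\sum_{i=1}^l z_i$, where $c_\partial$ is a fundamental cycle of $\partial M$ assembled from the unpaired boundary components, and each $z_i\in C_{n-1}(S_i;\IR)$ is a singular cycle supported on the glued hypersurface~$S_i$ (the image of $S_i^{\pm}$ in~$M$). Because $f_i$ is orientation-reversing, the two contributions to~$z_i$ coming from $\partial c_{j^+(i)}|_{S_i^+}$ and $\partial c_{j^-(i)}|_{S_i^-}$ represent opposite fundamental classes of~$S_i$, so that $[z_i]=0$ in $H_{n-1}(S_i;\IR)$.

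The main step, which is also the main obstacle, is to find chains $w_i\in C_n(M;\IR)$ supported in bicollar neighborhoods $S_i\times(-1,1)\subset M$ so that $c'\coloneqq c-\sum_i w_i$ is a genuine relative fundamental cycle of $(M,\partial M)$ and $\sum_i|w_i|_1<\varepsilon/2$. Since $\pi_1(S_i)$ is amenable, Gromov's vanishing theorem forces the fundamental class $[S_i]$ to admit representatives of arbitrarily small $\ell^1$-norm, and more generally the $\ell^1$-semi-norm vanishes on $H_k(S_i;\IR)$ for every $k\ge 1$. I would exploit this first to modify each~$c_j$ by a small-norm chain supported in a collar of its paired boundary components, so that the restrictions $\partial c_j|_{S_i^{\pm}}$ become arbitrarily small in $\ell^1$-norm (two fundamental cycles of a connected closed manifold differ by a null-homologous cycle, which can be killed by a small-norm filling inside the collar); and then to fill each resulting small null-homologous cycle $z_i$ by a small-norm chain~$w_i$ in a bicollar of~$S_i$. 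Both constructions rely on the F{\o}lner-type diffusion technique for amenable covers introduced by Gromov and made precise in~\cite{BBFIPP14}; controlling these fillings simultaneously without blowing up $|c|_1$ is the delicate point.

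Letting $\varepsilon\to 0$ yields $\|M,\partial M\|\le \sum_{j=1}^k\|M_j,\partial M_j\|$, and the ``in particular" clause follows immediately.
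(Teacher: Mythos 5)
Theorem~\ref{thm:sv_additivity} is stated as a citation to~\cite{Gromov82,BBFIPP14}; the paper gives no proof of it, so the natural in-paper comparison is with the proof of the analogous Proposition~\ref{prop:br_am}, which is spelled out. Your overall strategy---assemble near-optimal relative fundamental cycles of the pieces and absorb the boundary errors on the glued hypersurfaces using amenability---is the right one and matches that argument in spirit. However, there is a genuine gap in the step where you modify each $c_j$ ``by a small-norm chain supported in a collar'' so that the restrictions $\partial c_j|_{S_i^\pm}$ become small. The difference between $\partial c_j|_{S_i^\pm}$ (an arbitrary fundamental cycle of $S_i^\pm$ with no a priori norm control) and a small-norm fundamental cycle of $S_i^\pm$ is null-homologous but need not be small in $\ell^1$-norm; the uniform boundary condition then only produces a filling whose norm is comparable to that (possibly large) difference, and adding it inside the collar destroys your bound on $|c_j|_1$. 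You flag this as ``the delicate point,'' but it is not a technicality to be postponed---it is where the theorem actually lives.

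The missing ingredient is the equivalence theorem (also called the amenable boundary theorem), see \cite[Corollary~6.16]{Frigerio17}, which the paper invokes explicitly in the proof of Proposition~\ref{prop:br_am}: when $\pi_1(\partial M_j)$ is amenable, for every $\varepsilon>0$ there exists a relative fundamental cycle $c_j$ of $(M_j,\partial M_j)$ with $|c_j|_1\le\|M_j,\partial M_j\|+\varepsilon$ \emph{and} $|\partial c_j|_1\le\varepsilon$ \emph{simultaneously}. This requires the F\o lner/smearing machinery from the outset and is not obtained by a small post hoc modification of an arbitrary near-optimal cycle. Once you start from such cycles, the boundary errors $z_i$ along the glued hypersurfaces are already $\ell^1$-small, the uniform boundary condition of Matsumoto--Morita~\cite{MM85} produces small fillings $w_i$ inside the bicollars, and the argument closes exactly as you intended. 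Replacing your two-stage modification with a single application of the equivalence theorem would make the sketch correct.
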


We prove a vanishing result similar to Theorem \ref{thm:sv_additivity}.
While our assumptions on the pieces are stronger than the vanishing of simplicial volume, so is our conclusion on the gluing.
The advantage of our result is that the boundary components need not be amenable.

\begin{thm}
\label{thm:cat_gluing}
	In the situation of Setup~\ref{setup:gluing},
	suppose that the following hold:
	\begin{enumerate}[label=\enum]
		\item For all~$i\in \{1,\ldots,l\}$, the subspace~$S_i^\pm$ is $\pi_1$-injective in~$M_{j^\pm(i)}$;
		\item For all~$i\in \{1,\ldots,l\}$, we have~$\gd(\pi_1(S_i^+))\le n-2$;
		\item For all~$j\in \{1,\ldots,k\}$, we have $\cat_\AME(\pi_1(M_j))\le n-1$.
	\end{enumerate}
	Then $\cat_\AME(M) \le n-1$.

	Moreover, if every boundary component of the~$(M_j)_j$ is $\pi_1$-injective and has fundamental group of geometric dimension~$\le n-2$, then $\|M,\partial M\|=0$.
	\begin{proof}
		Since $\cat_\AME(M)\le \cat_\AME(\pi_1(M))$, we will prove the stronger statement $\cat_\AME(\pi_1(M))\le n-1$.
		Since~$M$ is constructed as a gluing along $\pi_1$-injective boundary components, $\pi_1(M)$ is the fundamental group of a finite graph of groups with vertex groups~$(\pi_1(M_j))_j$ and edge groups~$(\pi_1(S_i^+))_i$.
		Then the claim follows from Corollary~\ref{cor:gog}~\ref{item:gog_max}.
		The vanishing of simplicial volume follows from Lemma~\ref{lem:sv_vanishing}.
	\end{proof}
\end{thm}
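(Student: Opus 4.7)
The plan is to transfer the problem from the manifold $M$ to its fundamental group and then exploit the graph-of-groups decomposition supplied by the gluing. Since any classifying map $M \to B\pi_1(M)$ is a $\pi_1$-isomorphism, Definition~\ref{defn:catF} already yields $\cat_\AME(M) \le \cat_\AME(\pi_1(M))$, so it suffices to prove the sharper statement $\cat_\AME(\pi_1(M)) \le n-1$ and then read off $\cat_\AME(M)$ from it.

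For this, I would invoke hypothesis~(i): $\pi_1$-injectivity of each glued boundary component $S_i^\pm$ in the corresponding piece $M_{j^\pm(i)}$ allows iterated Seifert--van Kampen to present $\pi_1(M)$ as the fundamental group of a finite graph of groups with vertex groups $(\pi_1(M_j))_j$ and edge groups $(\pi_1(S_i^+))_i$. Then Corollary~\ref{cor:gog}~\ref{item:gog_max} applied to $\calF = \AME$ bounds $\cat_\AME(\pi_1(M))$ by the maximum of $\catres{\AME}{\pi_1(M_j)}$ over the vertices and of $\gd(\pi_1(S_i^+)) + 1$ over the edges. Hypothesis~(iii) bounds each vertex contribution by $n-1$, while hypothesis~(ii) bounds each edge contribution by $(n-2)+1 = n-1$, so the max is at most $n-1$, as required. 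The one subtlety worth flagging is that the \emph{max} form of the corollary is essential here: the sum form would in general exceed $n-1$, and it is exactly the match between the dimensional bound on boundary $\pi_1$'s and the amenable-category bound on the pieces that makes the maximum work out.

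For the simplicial volume assertion, my strategy is simply to feed the bound $\cat_\AME(\pi_1(M)) \le n-1$ just obtained into Lemma~\ref{lem:sv_vanishing} for $(M,\partial M)$. The extra assumption of the ``moreover'' clause, namely that \emph{every} boundary component of the pieces is $\pi_1$-injective and has fundamental group of geometric dimension at most $n-2$, transfers directly to the boundary components of $M$ that survive the gluing, so all three hypotheses of Lemma~\ref{lem:sv_vanishing} are met and $\|M,\partial M\|=0$ follows. The only technical step I expect to require a small amount of care is the Seifert--van Kampen argument producing the graph-of-groups description from $\pi_1$-injectivity alone; this is standard in the topological category, but is the hinge on which the whole argument turns.
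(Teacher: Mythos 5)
Your proof is correct and follows the paper's own argument exactly: reduce to $\cat_\AME(\pi_1(M))$, realise $\pi_1(M)$ as a finite graph of groups via the $\pi_1$-injective gluing, apply Corollary~\ref{cor:gog}~\ref{item:gog_max}, and feed the resulting bound into Lemma~\ref{lem:sv_vanishing} for the simplicial volume clause. The only addition you make is spelling out the arithmetic behind the max, which is fine.
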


While the dimension of the boundaries is $n-1$, Theorem~\ref{thm:cat_gluing} is useful in situations where the boundaries have fundamental groups of small geometric dimension.
In another direction, it is easy to construct an amenable cover of the gluing from amenable covers of the pieces and of their boundaries.
While we could apply Corollary~\ref{cor:gog}~\ref{item:gog_sum} on the level of fundamental groups, the estimate holds already on the level of spaces.

\begin{prop}
\label{prop:cat_gluing_sum}
	In the situation of Setup~\ref{setup:gluing}, suppose that~$S_i^\pm$ is $\pi_1$-injective in~$M_{j^\pm(i)}$ for all~$i\in \{1,\ldots,l\}$.
	Then
	\[
		\cat_\AME(M)
        \le \max_{j\in \{1,\ldots,k\}} \cat_\AME(M_j)
        + \max_{i\in \{1,\ldots,l\}}\bigl(\cat_\AME(S_i^+)+1\bigr).
	\]
	\begin{proof}
		The universal covering~$\widetilde{M}$ of the gluing~$M$ is a tree of spaces, which is a $\pi_1(M)$-pushout of the form
		\[\begin{tikzcd}
			\coprod_{i=1}^l \pi_1(M)\times_{\pi_1(S_i^+)}\widetilde{S_i^+}\times S^0\ar{r}\ar[hook]{d}
			& \coprod_{j=1}^k \pi_1(M)\times_{\pi_1(M_j)}\widetilde{M_j}\ar{d}
			\\
			\coprod_{i=1}^l \pi_1(M)\times_{\pi_1(S_i^+)}\widetilde{S_i^+}\times D^1\ar{r}
			& \widetilde{M}
		\end{tikzcd}\]
		Then the claim follows from Lemma~\ref{lem:catGF_basic} and Lemma~\ref{lem:key}~\ref{item:key_sum}.
	\end{proof}
\end{prop}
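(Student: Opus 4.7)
The plan is to translate the statement into the setting of equivariant categories via Proposition~\ref{prop:CLM}, present the universal cover of $M$ as a $\pi_1(M)$-pushout that reflects the tree-of-spaces decomposition, and then apply Lemma~\ref{lem:key}~\ref{item:key_sum} exactly once with $n=1$.

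The first step, and in my view the one that requires the most care, is setting up the pushout. The hypothesis that each $S_i^\pm$ is $\pi_1$-injective in $M_{j^\pm(i)}$ is precisely what is needed so that, by Van Kampen / Bass--Serre theory, $\pi_1(M)$ is the fundamental group of a finite graph of groups with vertex groups $(\pi_1(M_j))_j$ and edge groups $(\pi_1(S_i^+))_i$. Correspondingly, the universal cover $\widetilde M$ acquires the structure of a tree of spaces in which the preimage of each $M_j$ is a disjoint union of copies of $\widetilde{M_j}$ indexed by $\pi_1(M)/\pi_1(M_j)$, and similarly for the $S_i^+$; the gluing data along the collar $S_i^+\times [-1,1]$ yields the attaching map $\coprod_i \pi_1(M)\times_{\pi_1(S_i^+)}\widetilde{S_i^+}\times S^0\to \coprod_j \pi_1(M)\times_{\pi_1(M_j)}\widetilde{M_j}$ displayed in the diagram.

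Once this pushout is in hand, I would invoke Proposition~\ref{prop:CLM} to rewrite $\cat_\AME(M)=\cat^{\pi_1(M)}_\AME(\widetilde M)$, and analogously for the pieces $\widetilde{M_j}$ and $\widetilde{S_i^+}$; note that $\AME|_H$ coincides with $\AME$ on any subgroup $H$ since amenability is intrinsic, so there is no bookkeeping issue when restricting families. I would then apply Lemma~\ref{lem:key}~\ref{item:key_sum} with $n=1$ to the pushout, producing
\[
\cat^{\pi_1(M)}_\AME(\widetilde M) \le \cat^{\pi_1(M)}_\AME\Bigl(\coprod_{j} \pi_1(M)\times_{\pi_1(M_j)}\widetilde{M_j}\Bigr) + \cat^{\pi_1(M)}_\AME\Bigl(\coprod_{i}\pi_1(M)\times_{\pi_1(S_i^+)}\widetilde{S_i^+}\Bigr) + 1.
\]

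Finally, I would estimate each of the two terms on the right using Lemma~\ref{lem:catGF_basic}: part~(i) turns each disjoint union into a supremum over its constituents, and part~(ii) bounds each induced $G$-space by the corresponding $\cat^{H}_{\AME|_H}$ of the fibre, which by the first remark equals $\cat_\AME(M_j)$ and $\cat_\AME(S_i^+)$ respectively. Assembling the pieces yields exactly the stated inequality. The only non-routine part of the argument is justifying the tree-of-spaces pushout rigorously; everything else is a direct application of the lemmas already proved.
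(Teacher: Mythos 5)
Your proposal is correct and follows essentially the same route as the paper: present $\widetilde{M}$ as the displayed $\pi_1(M)$-pushout coming from the tree-of-spaces structure, apply Lemma~\ref{lem:key}~\ref{item:key_sum} with $n=1$, and reduce the two terms via Lemma~\ref{lem:catGF_basic} and Proposition~\ref{prop:CLM}. Your remark that $\AME|_H = \AME$ for every subgroup $H$ (because amenability is intrinsic) correctly disposes of the only potential bookkeeping issue the paper leaves implicit.
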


Basic examples of gluings are given by twisted doubles.

\begin{defn}
\label{defn:double}
	Let~$(M,\partial M)$ be an oriented compact connected manifold with non-empty boundary.
	Let~$f\colon \partial M\to \partial M$ be an orientation-preserving self-homeo\-morphism.
	The \emph{twisted double~$D_f(M)$ of~$M$ along~$f$} is the oriented closed connected manifold defined as
	\[
		D_f(M)\coloneqq M\cup_f -M,
	\]
	where~$-M$ is a copy of~$M$ with the opposite orientation.
	
	For the identity~$f=\id$ on~$\partial M$, one obtains the \emph{(untwisted) double~$D_{\id}(M)$ of~$M$}.
\end{defn}

The simplicial volume of twisted doubles satisfies subadditivity
\begin{equation}
\label{eqn:sv_double}
	\|D_f(M)\|\le 2\cdot \|M,\partial M\|
\end{equation}
if every boundary component of~$M$ has amenable fundamental group (Theorem~\ref{thm:sv_additivity}) or if $f$ is the identity on~$\partial M$ (e.g., \cite[Example~2.8]{LMR22}).
However, the inequality~\eqref{eqn:sv_double} does not hold in general~\cite[Remark~7.9]{Frigerio17}.
As applications of Theorem~\ref{thm:cat_gluing} and Proposition~\ref{prop:cat_gluing_sum}, we obtain vanishing results for the simplicial volume of twisted doubles.

\begin{cor}
\label{cor:cat_double}
	Let~$M$ be an oriented compact connected $n$-manifold with non-empty boundary.
	Suppose that the following hold:
	\begin{enumerate}[label=\enum]
		\item Every boundary component of~$M$ is $\pi_1$-injective in~$M$;
		\item Every boundary component of~$M$ has fundamental group of geometric dimension $\le n-2$;
		\item $\cat_\AME(\pi_1(M))\le n-1$.
	\end{enumerate}
	Let~$f\colon \partial M\to \partial M$ be an orientation-preserving self-homeomorphism.
	Then we have $\cat_\AME(D_f(M))\le n-1$.
	In particular, $\|D_f(M)\|=0$.
    \qed
\end{cor}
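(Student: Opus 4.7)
The plan is to recognise the twisted double $D_f(M)$ as a gluing in the sense of Setup~\ref{setup:gluing} and then invoke Theorem~\ref{thm:cat_gluing} directly. Take $k=2$ with pieces $M_1 \coloneqq M$ and $M_2 \coloneqq -M$, and pair the boundary components of $M_1$ with those of $M_2$ via the homeomorphism $f\colon \partial M \to \partial M$ (viewed componentwise as an orientation-reversing map between the corresponding boundary components of $M$ and $-M$, since $-M$ carries the opposite orientation). By the definition of $D_f(M)$, the resulting glued manifold is exactly $D_f(M)$, which is connected because $M$ is and the two copies are glued along $\partial M \ne \emptyset$.

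Next I would verify the three hypotheses of Theorem~\ref{thm:cat_gluing} for this gluing. Hypothesis~(i), $\pi_1$-injectivity of each paired boundary component in its ambient piece, holds for both $M_1 = M$ and $M_2 = -M$ by assumption~(i) on $M$ (the underlying manifold of $-M$ is $M$, so $\pi_1$-injectivity is the same statement). Hypothesis~(ii), that each $S_i^+$ has $\gd(\pi_1(S_i^+)) \le n-2$, is immediate from assumption~(ii). Hypothesis~(iii), that $\cat_\AME(\pi_1(M_j)) \le n-1$ for $j = 1,2$, holds by assumption~(iii) since $\pi_1(-M) = \pi_1(M)$. Theorem~\ref{thm:cat_gluing} then yields $\cat_\AME(D_f(M)) \le n-1$.

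Finally, since $D_f(M)$ is an oriented closed connected $n$-manifold with $\cat_\AME(D_f(M)) \le n-1 < n = \dim(D_f(M))$, Gromov's vanishing Theorem~\ref{thm:sv_vanishing} gives $\|D_f(M)\| = 0$.

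There is essentially no obstacle here: the work has already been done in Theorem~\ref{thm:cat_gluing} and Theorem~\ref{thm:sv_vanishing}. The only mild point requiring care is checking that the orientation conventions of Setup~\ref{setup:gluing} (pairings by orientation-reversing homeomorphisms between boundary components of the pieces) are compatible with the definition of $D_f(M)$ via an orientation-preserving self-homeomorphism $f$ of $\partial M$; this is the standard sign flip that arises from passing to $-M$.
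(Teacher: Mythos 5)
Your proof is correct and is exactly the argument the paper intends: the corollary is stated with a \qed and no proof because it is an immediate application of Theorem~\ref{thm:cat_gluing} (taking $k=2$, $M_1=M$, $M_2=-M$, and pairing boundary components via $f$, with the orientation reversal supplied by passing to $-M$), followed by Theorem~\ref{thm:sv_vanishing} for the vanishing of simplicial volume. You correctly identified and handled the one subtlety, namely the compatibility of the orientation conventions between Definition~\ref{defn:double} and Setup~\ref{setup:gluing}.
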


\begin{cor}
\label{cor:cat_double_sum}
	Let~$M$ be an oriented compact connected $n$-manifold with non-empty boundary.
	Suppose that every boundary component~$S_1,\ldots,S_l$ of~$M$ is $\pi_1$-injective in~$M$.
	Let~$f\colon \partial M\to \partial M$ be an orientation-preserving homeomorphism.
	Then
	\[
        \pushQED{\qed}
		\cat_\AME(D_f(M))\le \cat_\AME(M)+\max_{i\in \{1,\ldots,l\}}\bigl(\cat_\AME(S_i)+1\bigr).
        \qedhere
        \popQED
	\]
\end{cor}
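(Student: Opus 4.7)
The plan is simply to view $D_f(M)$ as a gluing in the sense of Setup~\ref{setup:gluing} and then invoke Proposition~\ref{prop:cat_gluing_sum}. Concretely, I take $k=2$ pieces $M_1\coloneqq M$ and $M_2\coloneqq -M$, and $l$ pairings indexed by the boundary components: for each $i\in\{1,\ldots,l\}$, set $S_i^+\coloneqq S_i\subset M$ and $S_i^-\coloneqq S_i\subset -M$, with gluing homeomorphism $f_i\coloneqq f|_{S_i}$. Since $f$ is orientation-preserving on $\partial M$ and $-M$ carries the opposite orientation at the boundary, $f_i$ is orientation-reversing as required. By construction, the resulting glued manifold is exactly $D_f(M)$, which is connected because each $S_i$ is non-empty.

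Next I verify the one hypothesis of Proposition~\ref{prop:cat_gluing_sum}, namely $\pi_1$-injectivity of each~$S_i^{\pm}$ in the adjacent piece. Since $M$ and $-M$ are equal as topological spaces and $S_i\hookrightarrow M$ is $\pi_1$-injective by assumption, both $S_i^+\hookrightarrow M_1$ and $S_i^-\hookrightarrow M_2$ are $\pi_1$-injective. Proposition~\ref{prop:cat_gluing_sum} then gives
\[
    \cat_\AME(D_f(M)) \le \max_{j\in\{1,2\}}\cat_\AME(M_j) + \max_{i\in\{1,\ldots,l\}}\bigl(\cat_\AME(S_i^+)+1\bigr).
\]
Because $M$ and $-M$ have the same underlying space, $\cat_\AME(M_1)=\cat_\AME(M_2)=\cat_\AME(M)$, and $S_i^+=S_i$ as spaces, so the displayed inequality reduces to the claimed bound.

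There is no real obstacle here; the corollary is just an unwinding of definitions followed by an application of Proposition~\ref{prop:cat_gluing_sum}. The only point worth flagging is the bookkeeping of orientations (to ensure that $f_i$ is orientation-reversing in the sense of Setup~\ref{setup:gluing}) and the observation that $\cat_\AME$ depends only on the underlying topological space, so that reversing orientation has no effect on the right-hand side.
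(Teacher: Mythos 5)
Your proof is correct and is exactly the argument the paper intends: the corollary is labelled~\qed\ because it follows immediately from Proposition~\ref{prop:cat_gluing_sum} by recognising $D_f(M)$ as a gluing in the sense of Setup~\ref{setup:gluing} with $k=2$ copies of~$M$. One minor bookkeeping slip: if $f$ permutes the boundary components, the pairings should be $(S_i^+,S_i^-)=(S_i, f(S_i))$ rather than $(S_i,S_i)$, so that $f_i\coloneqq f|_{S_i}$ really maps $S_i^+$ to $S_i^-$ as required by Setup~\ref{setup:gluing}; this changes nothing in the conclusion, since $\{f(S_i)\}_i = \{S_i\}_i$ and the right-hand side of Proposition~\ref{prop:cat_gluing_sum} only involves $\cat_\AME(S_i^+)=\cat_\AME(S_i)$.
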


We give concrete examples to which Corollary~\ref{cor:cat_double} and Corollary~\ref{cor:cat_double_sum} apply, but not Theorem~\ref{thm:sv_additivity} because the boundary is not amenable.

\begin{ex}
    	\label{ex:gluings}
    	Let~$n \in \IN$ with~$n\geq 4$.
   	Let~$V$ be an oriented compact $(n-1)$-manifold obtained by removing an open ball from an aspherical closed connected $(n-1)$-manifold.
	The boundary~$\partial V$ is an $(n-2)$-sphere.
    	Consider the $n$-manifold that is the boundary connected sum
   	\[
    		M \coloneqq (V\times S^1)\natural (V\times S^1).
   	\]
    	The boundary~$\partial M$ is homeomorphic to the connected sum $(S^{n-2}\times S^1)\# (S^{n-2} \times S^1)$ and~$\partial M$ is $\pi_1$-injective in~$M$.
    	Since~$\pi_1(\partial M)$ is a free group of rank~$2$, we have $\gd(\pi_1(\partial M))=1$.
    	Moreover, we have $\cat_\AME(\pi_1(M))\le n-1$ by Theorem~\ref{thm:main_max} and a standard estimate for~$\cat_\AME$ of products.
    	Then Corollary~\ref{cor:cat_double} shows that the simplicial volume of every twisted double of~$M$ is zero.
\end{ex}

\begin{ex}
	Let~$\Sigma_{1,1}$ denote the orientable surface of genus~$1$ with one boundary component.
    	Consider the $4$-manifold that is the boundary connected sum
   	\[
    		M\coloneqq (\Sigma_{1,1}\times S^1\times S^1)\natural (\Sigma_{1,1}\times S^1\times S^1).
    	\]
	The boundary~$\partial M$ is homeomorphic to $(S^1\times S^1\times S^1)\# (S^1\times S^1\times S^1)$ and~$\partial M$ is $\pi_1$-injective in~$M$.
    	Since $\pi_1(\partial M)\cong \IZ^3\ast \IZ^3$, we have $\cat_\AME(\partial M)\le \cat_\AME(\pi_1(\partial M))=1$.
    Since $\pi_1(M)\cong (F_2\times \IZ^2)\ast (F_2\times \IZ^2)$, we have $\cat_\AME(M)\le \cat_\AME(\pi_1(M))=1$ by Theorem~\ref{thm:main_max}.
    Then Corollary~\ref{cor:cat_double_sum} shows that the simplicial volume of every twisted double of~$M$ is zero.
\end{ex}

\section{Simplicial volume of cyclic branched coverings}
We prove a vanishing result for the simplicial volume of cyclic branched coverings.
We use the following description of cyclic branched coverings of manifolds, branched over a codimension~$2$ submanifold.

\begin{setup}
\label{setup:branching}
	Let~$n\in \IN$ with~$n\ge 3$.
	\begin{itemize}
		\item Let~$M$ be an oriented compact connected $(n-1)$-manifold with non-empty connected boundary.
		\item Let~$W$ be an oriented compact connected $n$-manifold whose boundary is homeomorphic to the untwisted double of~$M$ (Definition~\ref{defn:double}):
		\[
			\partial W\cong D_{\id}(M)=M\cup_{\id} -M.
		\]
		\item Let~$d\in \IN_{\ge 1}$.
		We denote by~$\brW{d}$ the oriented closed connected $n$-manifold obtained by gluing $d$ copies of~$W$ cyclically along copies of~$M$ via the identity map on~$M$.
		More precisely, for~$j\in \{1,\ldots,d\}$, let~$W_j=W$ and~$M_j=M$ with~$\partial W_j\cong M_j\cup_{\id} -M_j$.
		Then~$\brW{d}$ is obtained from~$\coprod_{j=1}^d W_j$ by identifying~$-M_j$ with~$M_{j+1}$, where~$j$ is considered modulo~$d$.
	\end{itemize}
\end{setup}

\begin{figure}[h]
  \def\wpiece#1#2{
    \begin{scope}[shift={#1}]
       \begin{scope}[rotate={#2}]
          \begin{scope}[shift={(0,0.2)}]
            \filldraw[fill=black!20] (0,0) -- (30:1) .. controls +(90:1.5) and +(90:1.5) .. (150:1) -- cycle;
            \fill (0,0) circle (0.1);
            \filldraw[fill=white] (0,0.8) circle (0.3);
          \end{scope}
       \end{scope}
    \end{scope}}
  \begin{center}  
  \begin{tikzpicture}[thick]
    \wpiece{(0,0)}{0}
    \draw (0,-0.2) node {$\partial M$};
    \draw (-0.7,0.2) node {$M$};
    \draw (0.7,0.2) node {$-M$};
    \draw (0,2.2) node {$W$};

    \begin{scope}[shift={(4.5,0)}]
      \wpiece{(0,0)}{0}
      \wpiece{(0,0)}{120}
      \wpiece{(0,0)}{240}
      \draw (0,2.2) node {$W_1 \sqcup W_2 \sqcup W_3$};
    \end{scope}

    \begin{scope}[shift={(9,0)}]
      \wpiece{(0,-0.2)}{0}
      \wpiece{(30:0.2)}{120}
      \wpiece{(150:0.2)}{240}
      \draw (0,2.2) node {$\brW{3}$};
    \end{scope}
  \end{tikzpicture}
  \end{center}
  
  \caption{The manifolds $W$ and $\brW{3}$, schematically}
\end{figure}

The projection map $\brW{d}\to \brW{1}$ is a branched $d$-fold covering with branching locus~$\partial M$.
One may view~$W$ as being obtained by cutting~$\brW{1}$ along~$M$.

We prove the subadditivity of simplicial volume similarly to Theorem~\ref{thm:sv_additivity}~\cite[Remark~9.2]{Frigerio17}.

\begin{prop}
        \label{prop:br_am}
	In the situation of Setup~\ref{setup:branching}, suppose that~$\pi_1(\partial W)$ is amenable and that~$\|M,\partial M\|=0$.
	Then 
	\[
		\|\brW{d}\|\le d\cdot \|W,\partial W\|.
	\]
	In particular, if $\|W,\partial W\|=0$, then $\|\brW{d}\|=0$.
\begin{proof}	
	We construct  ``small" relative fundamental cycles of~$(W,\partial W)$ whose restrictions to~$\partial W$ are compatible with the decomposition as an untwisted double.
	Let~$\varepsilon\in \IR_{>0}$.
	Since $\|M,\partial M\|=0$, there exists a relative fundamental cycle~$c\in C_{n-1}(M;\IR)$ of~$(M,\partial M)$ satisfying~$|c|_1\le \varepsilon$.
	We set~$\overline{c}$ to be the relative fundamental cycle of~$(-M,\partial(-M))$ corresponding to~$c$.
	We also view~$c$ and~$\overline{c}$ as elements in~$C_{n-1}(\partial W;\IR)$.
	Then~$c+\overline{c}\in C_{n-1}(\partial W;\IR)$ is a fundamental cycle of~$\partial W$ satisfying $|c+\overline{c}|_1<2\varepsilon$.
	
	Since~$\pi_1(\partial W)$ is amenable, by the equivalence theorem~\cite[Corollary~6.16]{Frigerio17}, there exists a relative fundamental cycle $w\in C_n(W;\IR)$ of~$(W,\partial W)$ satisfying
	\[
		|w|_1\le \|W,\partial W\|+\varepsilon \quad\text{and}\quad |\partial w|_1\le \varepsilon.
	\]
	Moreover, by amenability of~$\pi_1(\partial W)$, the singular chain complex~$C_*(\partial W;\IR)$ satisfies the uniform boundary condition in every degree~\cite{MM85}. Let $K \in \IR_{\geq 0}$ be a UBC-constant for~$C_*(\partial W;\IR)$ in degree~$n-1$. 
	Since~$\partial w$ and~$c+\overline{c}$ represent the same class in~$H_{n-1}(\partial W;\IR)$, there exists~$b\in C_n(\partial W;\IR)$ satisfying $\partial b=c+\overline{c}-\partial w$ and
	\[
		|b|_1\le K\cdot |\partial b|_1\le 3K\varepsilon.
	\]
	Then~$w'\coloneqq w+b\in C_n(W;\IR)$ is a relative fundamental cycle 
        of~$(W,\partial W)$
        satisfying $\partial(w')=c+\overline{c}$ and
	\[
		|w'|_1\le \|W,\partial W\|+\varepsilon+3K\varepsilon.
	\]
    
	For every~$j\in \{1,\ldots,d\}$, let~$W_j$ be a copy of~$W$ and let~$w'_j\in C_n(W_j;\IR)$ be the relative fundamental cycle of~$(W_j,\partial W_j)$ corresponding to~$w'$ as above.
	We also view~$w'_j$ as an element in~$C_n(\brW{d};\IR)$.
	Then $\sum_{j=1}^d w'_j\in C_n(\brW{d};\IR)$ is a fundamental cycle of~$\brW{d}$ satisfying
	\[
		\Bigl|\sum_{j=1}^d w'_j\Bigr|_1\le d\cdot \bigl(\|W,\partial W\|+\varepsilon +3K\varepsilon\bigr).
	\]
        Taking~$\varepsilon \to 0$ shows that $\|\brW{d}\| \leq d \cdot \|W,\partial W\|$.
\end{proof}
\end{prop}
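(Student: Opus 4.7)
The plan is to glue $d$ copies of suitably modified relative fundamental cycles of $(W, \partial W)$ around the cyclic identification pattern, so that boundary contributions cancel across the $M_j \leftrightarrow -M_{j-1}$ gluings. The key point is to prescribe the boundary of a small-norm relative fundamental cycle of $(W,\partial W)$ to be a cycle that splits as ``one half on $M$ plus one half on $-M$''; under the gluing this splits the cyclic sum into telescoping pieces that cancel.

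First I would exploit the hypothesis $\|M,\partial M\|=0$ to pick, for any $\varepsilon>0$, a relative fundamental cycle $c\in C_{n-1}(M;\IR)$ of $(M,\partial M)$ with $|c|_1 < \varepsilon$. Let $\overline c$ be the corresponding relative fundamental cycle of $(-M,\partial M)$. Then $c + \overline c \in C_{n-1}(\partial W;\IR)$ is an honest fundamental cycle of $\partial W$ with $|c+\overline c|_1<2\varepsilon$. The target is to produce a relative fundamental cycle $w'$ of $(W,\partial W)$ whose boundary is \emph{exactly} $c+\overline c$, while keeping $|w'|_1$ close to $\|W,\partial W\|$.

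To arrange this, I would invoke the two standard consequences of the amenability of $\pi_1(\partial W)$. The \emph{equivalence theorem} provides a relative fundamental cycle $w$ of $(W,\partial W)$ with $|w|_1 \le \|W,\partial W\|+\varepsilon$ and $|\partial w|_1 \le \varepsilon$. The \emph{uniform boundary condition} gives a constant $K$ (depending only on $\partial W$ and $n-1$) such that any $(n-1)$-boundary $z$ in $C_*(\partial W;\IR)$ admits a filling $b$ with $|b|_1 \le K\cdot |z|_1$. Since $\partial w$ and $c+\overline c$ are homologous fundamental cycles of $\partial W$, their difference is a boundary of $\ell^1$-norm at most $3\varepsilon$, so there exists $b\in C_n(\partial W;\IR)$ with $\partial b = c+\overline c - \partial w$ and $|b|_1 \le 3K\varepsilon$. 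Setting $w' := w + b$ yields a relative fundamental cycle of $(W,\partial W)$ with $\partial w' = c+\overline c$ and $|w'|_1 \le \|W,\partial W\| + (1+3K)\varepsilon$.

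Finally, I would transport $w'$ to each $W_j$ to obtain $w'_j \in C_n(W_j;\IR)$, viewed inside $C_n(\brW{d};\IR)$. Under the identification $-M_j = M_{j+1}$, the $\overline c$-piece of $\partial w'_j$ cancels the $c$-piece of $\partial w'_{j+1}$ (orientations are reversed by the gluing), so $\sum_{j=1}^d w'_j$ is a genuine cycle; a local degree check on each summand shows it is a fundamental cycle of $\brW{d}$. The norm estimate $|\sum_j w'_j|_1 \le d\cdot(\|W,\partial W\| + (1+3K)\varepsilon)$ and letting $\varepsilon \to 0$ yields $\|\brW{d}\| \le d\cdot \|W,\partial W\|$. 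The main obstacle I anticipate is the simultaneous control of $|w|_1$ and $|\partial w|_1$, which is precisely what forces the amenability hypothesis on $\pi_1(\partial W)$ via the equivalence theorem and UBC; without these two tools, one cannot correct the boundary of $w$ to the prescribed cycle $c+\overline c$ without losing the $\ell^1$-estimate.
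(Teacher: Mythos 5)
Your proposal is correct and follows essentially the same route as the paper: small relative fundamental cycle $c$ on $M$ from $\|M,\partial M\|=0$, equivalence theorem plus UBC (both via amenability of $\pi_1(\partial W)$) to correct a small relative fundamental cycle of $(W,\partial W)$ so that its boundary is exactly $c+\overline{c}$, then gluing $d$ copies with telescoping cancellation. The paper's argument is the same step for step, including the $3K\varepsilon$ bound on the UBC filling and the limit $\varepsilon\to 0$.
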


We prove a vanishing result similar to Proposition~\ref{prop:br_am}.
Our assumptions are stronger than the vanishing of~$\|M,\partial M\|$ and~$\|W,\partial W\|$ but we do not require the amenability of~$\pi_1(\partial W)$.

\begin{thm}
\label{thm:cat_branching}
	In the situation of Setup~\ref{setup:branching}, let~$d\ge 4$ and suppose that the following hold:
	\begin{enumerate}[label=\enum]
		\item\label{item:pi1-inj}
		$M,-M,\partial M$ are $\pi_1$-injective in~$W$;
		\item\label{item:curvature}
		$\pi_1(M,x)\cap \pi_1(-M,x)=\pi_1(\partial M,x)$ inside $\pi_1(W,x)$ for some~$x\in \partial M$;
		\item $\gd(\pi_1(\partial M))\le n-3$;
		\item $\gd(\pi_1(M))\le n-2$;
		\item $\cat_\AME(\pi_1(W))\le n-1$.
	\end{enumerate}
	Then $\cat_\AME(\brW{d})\le n-1$.
	In particular, $\|\brW{d}\|=0$.
\end{thm}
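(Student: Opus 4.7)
The plan is to realise $\pi_1(\brW{d})$ as the fundamental group of a polygon of groups on a $d$-gon, to use its development as a contractible $2$-dimensional $G$-CW-complex with $G=\pi_1(\brW{d})$ and with the appropriate isotropy, and then to apply Theorem~\ref{thm:main} with $I=\{1,2\}$; Theorem~\ref{thm:sv_vanishing} then yields the simplicial volume vanishing.

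First I would set up the polygon of groups. Let $P$ be a $d$-gon with vertices $v_1,\ldots,v_d$, edges $e_1,\ldots,e_d$, and a single $2$-cell~$f$. Assign the vertex groups $\pi_1(W_j) \cong \pi_1(W)$, the edge groups $\pi_1(M_j) \cong \pi_1(M)$, and the face group $\pi_1(\partial M)$, with the structural monomorphisms induced by $\partial M \hookrightarrow M\hookrightarrow W$ and $\partial M\hookrightarrow -M\hookrightarrow W$ matched to the cyclic gluing of the $W_j$. Assumption~\ref{item:pi1-inj} ensures that all these structural maps are injective, and assumption~\ref{item:curvature} says precisely that the two copies of $\pi_1(M)$ inside each $\pi_1(W_j)$ intersect exactly in the image of $\pi_1(\partial M)$. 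A Van Kampen / Bass--Serre argument, applied inductively along the cyclic gluing, identifies $\pi_1(\brW{d})$ with the fundamental group of this polygon of groups.

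Next I would verify that the associated development $X$ is a contractible $2$-dimensional $G$-CW-complex whose cell stabilisers are conjugates of $\pi_1(W)$, $\pi_1(M)$, and $\pi_1(\partial M)$ in dimensions $0$, $1$, and $2$ respectively. Simple-connectivity of~$X$ is formal from developability. For asphericity I would metrise $P$ as a regular Euclidean $d$-gon and check Gromov's link condition. The link at any lift of a vertex of $P$ is naturally bipartite, and the ``no $2$-cycle'' condition unravels to precisely assumption~\ref{item:curvature}, so the link has girth at least~$4$. For $d\ge 4$ the Euclidean corner angle $(d-2)\pi/d$ makes every $4$-cycle have length $\ge 2\pi$; hence the link is CAT($1$), the development is CAT($0$), and in particular contractible. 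Now I would apply Theorem~\ref{thm:main} to $X$ with $I=\{1,2\}$. Hypothesis~(v) gives $d_0\le \cat_\AME(\pi_1(W))\le n-1$; hypothesis~(iv) gives $\gd(\pi_1(M))+1\le n-1$, hence $d_1\le n-1$; hypothesis~(iii) gives $\gd(\pi_1(\partial M))+2\le n-1$, hence $d_2\le n-1$. Since $\cat_\AME(\brW{d})\le \cat_\AME(\pi_1(\brW{d}))$, this gives $\cat_\AME(\brW{d})\le n-1$, and Theorem~\ref{thm:sv_vanishing} yields $\|\brW{d}\|=0$.

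The main obstacle will be justifying the polygon-of-groups step: carefully writing down the diagram of groups, establishing its developability, and identifying its fundamental group with $\pi_1(\brW{d})$ all rely essentially on both~\ref{item:pi1-inj} and~\ref{item:curvature} (the former for injectivity of structural maps, the latter for the crucial bipartite link condition that ultimately delivers CAT($0$) geometry). The hypothesis $d\ge 4$ enters exactly at the Euclidean corner-angle computation in the CAT($0$) check and is sharp for this approach.
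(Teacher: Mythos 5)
Your proof is correct and follows essentially the same route as the paper. The paper proves exactly this via Corollary~\ref{cor:polygon}: it recognises $\pi_1(\brW{d})$ as the fundamental group of a simple complex of groups over the poset of faces of a regular Euclidean $d$-gon (with vertex groups $\pi_1(W)$, edge groups $\pi_1(M)$, face group $\pi_1(\partial M)$), cites Bridson--Haefliger (II.12.28/II.12.29 and p.~562) for developability and $\mathrm{CAT}(0)$-ness of the development under assumption~\ref{item:curvature} and $d\ge 4$, and then invokes Theorem~\ref{thm:main_max}. You re-derive the $\mathrm{CAT}(0)$ property directly via Gromov's link condition rather than citing it, and you use the polygonal $G$-CW-structure on the development (stabilisers $\pi_1(W)$, $\pi_1(M)$, $\pi_1(\partial M)$ in dimensions $0,1,2$) rather than the barycentric model used implicitly in the paper (where $0$-cells also have stabilisers $G_e$ and $G_f$); both structures yield the same bound, since $\cat_\calF(H)\le\cd(H)\le\gd(H)$ absorbs the extra $0$-cell terms. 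Applying Theorem~\ref{thm:main} with $I=\{1,2\}$ to a $2$-dimensional complex is the same as Theorem~\ref{thm:main_max}, so the numerical bookkeeping is identical.
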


If in addition to assumption~\ref{item:pi1-inj} also~$\partial W$ is $\pi_1$-injective in~$W$, then assumption~\ref{item:curvature} holds.

For the proof of Theorem~\ref{thm:cat_branching} we use that the fundamental group of~$\brW{d}$ is the fundamental group of a $d$-gon of groups~\cite[Example~II.12.17~(6)]{BH99}.
Therefore, we first establish the following consequence of Theorem~\ref{thm:main_max}. 

\begin{cor}
\label{cor:polygon}
	Let~$d\in \IN$ with~$d\ge 4$.
	Let~$\calQ_d$ be the poset of faces of a regular Euclidean $d$-gon with vertex set~$V$, edge set~$E$, and 2-face~$f$.
	Let~$G(\calQ_d)$ be a simple complex of groups over~$\calQ_d$ with vertex groups~$(G_v)_{v\in V}$, edge groups~$(G_e)_{e\in E}$, and $2$-face group~$G_f$.
	Suppose that for every vertex~$v\in V$ and the two edges~$e,e'\in E$ adjacent to~$v$, we have 
	\begin{equation}
	\label{eqn:curvature}
	G_e\cap G_{e'}=G_f \text{ inside } G_v.
	\end{equation}
	Let~$G$ be the fundamental group of~$G(\calQ_d)$ and let~$\calF$ be a family of subgroups of~$G$.
	Then
	\[
		\cat_\calF(G)\le \max\Bigl\{\sup_{v\in V}\catres{\calF}{G_v},\, \sup_{e\in E}\bigl(\gd(G_e)+1\bigr),\, \gd(G_f)+2\Bigr\}.
	\]
	\begin{proof}
        Assumption~\eqref{eqn:curvature} implies that the simple complex of groups~$G(\calQ_d)$ is non-positively curved~\cite[II.12.29]{BH99} and hence developable~\cite[Theorem~II.12.28]{BH99}.
		Then (the geometric realisation of) the development of~$G(\calQ_d)$ is a 2-dimensional $G$-CW-complex that is contractible (in fact $\mathrm{CAT}(0)$~\cite[p.~562]{BH99}) and whose stabilisers of $0$-cells are~$(G_v)_v$, $(G_e)_e$, and~$G_f$, stabilisers of $1$-cells are~$(G_e)_{e}$ and~$G_f$, and stabilisers of $2$-cells are~$G_f$, up to conjugation.
		Then the claim follows from Theorem~\ref{thm:main_max}.
	\end{proof}
\end{cor}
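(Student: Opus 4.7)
The plan is to realise $G$ as acting on a 2-dimensional contractible $G$-CW-complex with transparent cell stabilisers, and then to feed that complex into Theorem~\ref{thm:main_max}. The space in question will be (the geometric realisation of) the development of~$G(\calQ_d)$.

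First, I verify developability. With the natural Euclidean metric on $\calQ_d$ coming from the regular $d$-gon, the local geometry around a vertex $v \in V$ is controlled by the two adjacent edges $e, e'$. Since $d \geq 4$, the interior angle at $v$ is at least $\pi/2$, so the Gromov link condition at $v$ reduces to the assertion $G_e \cap G_{e'} = G_f$ inside $G_v$, which is exactly hypothesis~\eqref{eqn:curvature}. By the Bridson--Haefliger theorem cited in the excerpt, $G(\calQ_d)$ is therefore non-positively curved, hence developable, and its development $X$ carries a $\mathrm{CAT}(0)$ (so in particular contractible) metric; it is a 2-dimensional $G$-CW-complex.

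Second, I read off the stabilisers from the complex-of-groups structure. Modelled on the order complex of the face poset, $X$ has orbit representatives of $0$-cells indexed by $V \sqcup E \sqcup \{f\}$ with stabilisers $G_v$, $G_e$, $G_f$; of $1$-cells with stabilisers $G_e$ or $G_f$ (according to whether the chain is $v < e$, $v < f$, or $e < f$); and of $2$-cells with stabiliser $G_f$, all up to conjugation.

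Third, I apply Theorem~\ref{thm:main_max} to the contractible 2-dimensional $G$-CW-complex $X$. This yields the upper bound
\[
\max\Bigl\{\sup_{v\in V}\catres{\calF}{G_v},\, \sup_{e\in E}\catres{\calF}{G_e},\, \catres{\calF}{G_f},\, \sup_{e\in E}(\gd(G_e)+1),\, \gd(G_f)+1,\, \gd(G_f)+2\Bigr\}.
\]
Using the monotonicity $\catres{\calF}{H} \leq \cat_\TR(H) = \cd(H) \leq \gd(H)$, valid for any family $\calF$ and any subgroup $H$ (because $\TR \subseteq \calF$, combined with Eilenberg--Ganea--Stallings--Swan), the terms $\sup_{e}\catres{\calF}{G_e}$, $\catres{\calF}{G_f}$ and $\gd(G_f)+1$ are dominated by $\sup_{e}(\gd(G_e)+1)$ and $\gd(G_f)+2$, giving precisely the claimed inequality. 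The main obstacle I expect is the careful verification of developability together with the correct identification of the $G$-CW-structure and its cell stabilisers; once $X$ is in hand, the rest is a formal application of Theorem~\ref{thm:main_max} and elementary monotonicity.
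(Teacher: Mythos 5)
Your proof is correct and follows essentially the same route as the paper: appeal to the Bridson--Haefliger developability criterion to obtain a contractible $2$-dimensional $G$-CW-complex (the development, modelled on the order complex of $\calQ_d$), read off the cell stabilisers up to conjugation, and invoke Theorem~\ref{thm:main_max}. The one point you spell out that the paper leaves implicit is the absorption of the extra $0$-cell and $1$-cell contributions via $\catres{\calF}{H}\le\cat_\TR(H)=\cd(H)\le\gd(H)$ (using that $\TR\subseteq\calF$), which is exactly the right observation and completes the argument cleanly.
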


\begin{rem}
    Corollary~\ref{cor:polygon} generalises to fundamental groups of non-positively curved complexes of groups~\cite[Chapter~III.$\mathcal{C}$]{BH99}.
    One applies Theorem~\ref{thm:main} to their development, which is contractible~\cite[p.~562]{BH99}.
\end{rem}

\begin{proof}[Proof of Theorem~\ref{thm:cat_branching}]
	Since $\cat_\AME(\brW{d})\le \cat_\AME(\pi_1(\brW{d}))$, we will prove the stronger statement $\cat_\AME(\pi_1(\brW{d}))\le n-1$.
	The group~$\pi_1(\brW{d})$ is the fundamental group of a simple complex of groups over the poset~$\calQ_d$ with vertex groups equal to~$\pi_1(W)$, edge groups equal to~$\pi_1(M)$, and 2-face group~$\pi_1(\partial M)$.
	Then the claim follows from Corollary~\ref{cor:polygon}.
	The vanishing of simplicial volume follows from the vanishing theorem (Theorem~\ref{thm:sv_vanishing}).
\end{proof}

We give a concrete example to which Theorem~\ref{thm:cat_branching} applies.

\begin{ex}
    	Let~$n\in \IN$ with~$n\ge 4$.
    	Let~$(N,\partial N)$ be an oriented compact connected $n$-manifold with $\pi_1$-injective boundary.
    	Consider the boundary connected sum~$W \coloneqq N\natural N$.
    	The boundary~$\partial W$ is homeomorphic to the connected sum~$\partial N \# \partial N$. 
	Then~$\partial W$ is the untwisted double of the manifold~$M$ that is obtained from~$\partial N$ by removing an open ball.
	The boundary~$\partial M$ is an $(n-2)$-sphere.
	Then the assumptions~\ref{item:pi1-inj} and~\ref{item:curvature} of Theorem \ref{thm:cat_branching} are satisfied.
    	Moreover, we have $\cat_\AME(\pi_1(W))\leq \max\{\cat_\AME(\pi_1(N)),1\}$, $\gd(\pi_1(M))=\gd(\pi_1(\partial N))$, and $\gd(\pi_1(\partial M)) = 0$.
    	Hence, if $\cat_\AME(\pi_1(N)) \leq n-1$ and $\gd(\pi_1(\partial N))\leq n-2$, then Theorem~\ref{thm:cat_branching} shows that $\|\brW{d}\|=0$ for every~$d\in \IN$ with~$d\ge 4$.
    	For example, all assumptions are satisfied by the manifold~$N = V\times S^1$,
        where~$V$ is obtained by removing an open ball from an aspherical closed connected $(n-1)$-manifold as in Example~\ref{ex:gluings}.
\end{ex}

\section{Topological complexity}
Farber's topological complexity is defined via domains of continuity for sections of the path fibration~\cite{Farber03}.
We use an equivalent characterisation for the topological complexity of groups, phrased as the $\calF$-category for the diagonal family~\cite{FGLO19}.

\begin{defn}
	Let~$G$ be a group.
	Let~$\calD$ be the smallest family of subgroups of~$G\times G$ containing the diagonal subgroup.
	The \emph{topological complexity~$\TC(G)$ of~$G$} is defined as
	\[
		\TC(G)\coloneqq \cat_{\calD}(G\times G).
	\]
\end{defn}

As an application of Theorem~\ref{thm:main_max}, we obtain a combination theorem for~$\TC$.

\begin{thm}
\label{thm:TC}
	Let~$G$ be a group and let~$X$ be a (non-equivariantly) contractible $G$-CW-complex.
	Let~$\Sigma_0$ and~$\Sigma_{\ge 1}$ be sets of $G$-orbit representatives of all $0$-cells and all positive dimensional cells of~$X$, respectively.
	Then
	\begin{align*}
		\TC(G)\le
		\max\Bigl\{
		& \sup_{v\in \Sigma_0} \TC(G_v),
		\sup_{v,w\in \Sigma_0, v\neq w} \cd(G_v\times G_w),
		\\
		& \sup_{\sigma\in \Sigma_0\cup \Sigma_{\ge 1},\tau\in \Sigma_{\ge 1}}\bigl(\gd(G_\sigma\times G_\tau)+\dim(\sigma)+\dim(\tau)\bigr)\Bigr\}.
	\end{align*}
	\begin{proof}
		We apply Theorem~\ref{thm:main_max} to the $(G\times G)$-CW-complex~$X\times X$ and the family~$\calD$.
		The stabiliser of a cell~$\sigma\times \tau$ of~$X\times X$ is~$G_\sigma\times G_\tau$.
		For~$v,w\in \Sigma_0$ with~$v\neq w$, we use
		\begin{align*}
			\catres{\calD}{G_v\times G_w}
			&\le \cat_\TR(G_v\times G_w)
			= \cd(G_v\times G_w);
			\\
			\catres{\calD}{G_v\times G_v}
			&\le \cat_{\calD_v}(G_v\times G_v)
			= \TC(G_v).
		\end{align*}
		Here~$\calD_v$ denotes the smallest family of subgroup of~$G_v\times G_v$ containing the diagonal subgroup.
	\end{proof}
\end{thm}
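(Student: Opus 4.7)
The plan is to apply Theorem~\ref{thm:main_max} to the $(G\times G)$-CW-complex $X\times X$ equipped with the product action, and to the family $\calD$. Since $X$ is (non-equivariantly) contractible, so is $X\times X$; the stabiliser of a product cell $\sigma\times\tau$ is $G_\sigma\times G_\tau$; and a set of $(G\times G)$-orbit representatives of cells is given by the products $\sigma\times\tau$ with $\sigma,\tau$ ranging over $\Sigma_0\cup\Sigma_{\ge 1}$. Such a cell is $0$-dimensional exactly when both $\sigma,\tau\in\Sigma_0$, in which case Theorem~\ref{thm:main_max} contributes $\catres{\calD}{G_v\times G_w}$; otherwise it contributes $\gd(G_\sigma\times G_\tau)+\dim(\sigma\times \tau)=\gd(G_\sigma\times G_\tau)+\dim(\sigma)+\dim(\tau)$.

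Next I would analyse the $0$-cell contribution by splitting into two subcases. When $v=w$, the diagonal $\Delta_{G_v}$ of $G_v\times G_v$ is a subgroup of the diagonal $\Delta_G$ of $G\times G$ and therefore lies in~$\calD$; hence the minimal diagonal family $\calD_v$ on $G_v\times G_v$ is contained in $\calD|_{G_v\times G_v}$. Since the $\calF$-category is anti-monotone in~$\calF$, this yields $\catres{\calD}{G_v\times G_v}\le \cat_{\calD_v}(G_v\times G_v)=\TC(G_v)$. When $v\neq w$, I can only guarantee that the trivial family $\TR$ is contained in $\calD|_{G_v\times G_w}$, giving the crude bound $\catres{\calD}{G_v\times G_w}\le \cat_\TR(G_v\times G_w)=\cd(G_v\times G_w)$. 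For positive-dimensional product cells, the contribution is symmetric in $\sigma$ and $\tau$, so I may restrict to $\tau\in\Sigma_{\ge 1}$ with $\sigma\in\Sigma_0\cup\Sigma_{\ge 1}$. Assembling the three types of contributions yields exactly the maximum in the statement.

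The main obstacle is the bookkeeping for the off-diagonal case $v\neq w$: one must recognise that $\calD|_{G_v\times G_w}$ need not contain the diagonal of $G_v\times G_w$ (since that subgroup is not conjugate into $\Delta_G$ inside $G\times G$ when $v$ and $w$ lie in different $G$-orbits), which forces the retreat to the trivial family and the appearance of $\cd(G_v\times G_w)$ rather than a $\TC$-like quantity in the statement. Apart from this subtlety, the proof is a straightforward application of Theorem~\ref{thm:main_max} to the diagonal structure on $X\times X$.
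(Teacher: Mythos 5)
Your proof is correct and follows the same approach as the paper: apply Theorem~\ref{thm:main_max} to the $(G\times G)$-CW-complex $X\times X$ with the family $\calD$, observe that cell stabilisers are products $G_\sigma\times G_\tau$, and bound the $\calD$-category of the $0$-cell stabilisers by $\TC(G_v)$ when $v=w$ (via $\calD_v\subseteq\calD|_{G_v\times G_v}$) and by $\cd(G_v\times G_w)$ when $v\neq w$ (via $\TR\subseteq\calD|_{G_v\times G_w}$). Your explicit appeal to anti-monotonicity of $\cat_\calF$ in $\calF$ supplies the justification that the paper leaves implicit in its two displayed inequalities; everything else (product orbit representatives, symmetry of the positive-dimensional contribution) matches the paper's argument.
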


As a special case of Theorem~\ref{thm:TC} for graphs of groups, we recover a result for amalgamated products of Dranishnikov--Sadykov~\cite[Theorem~16]{Dranishnikov-Sadykov19}.

\begin{cor}
\label{cor:TC}
	Let~$G$ be the fundamental group of a graph of groups with vertex groups~$(G_v)_{v\in V}$ and edge groups~$(G_e)_{e\in E}$.
	Then
	\begin{align*}
        \pushQED{\qed}
		\TC(G)\le \max\Bigl\{
		&\sup_{v\in V} \TC(G_v),\sup_{v,w\in V,v\neq w} \cd(G_v\times G_w),
		\\
		& \sup_{v\in V,e\in E}\bigl(\gd(G_v\times G_e)+1\bigr),\sup_{e,d\in E}\bigl(\gd(G_e\times G_d)+2\bigr)\Bigr\}.
        \qedhere
	\end{align*}
        \popQED
\end{cor}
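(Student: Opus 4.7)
The plan is to invoke Theorem~\ref{thm:TC} with $X$ taken to be the Bass--Serre tree associated with the given graph-of-groups decomposition of~$G$. This tree is a $1$-dimensional, non-equivariantly contractible $G$-CW-complex, and its $0$-cells and $1$-cells admit $G$-orbit representatives indexed respectively by the vertex set~$V$ and the edge set~$E$, with stabilisers (up to conjugation) the vertex groups~$G_v$ and the edge groups~$G_e$. So I would take $\Sigma_0$ corresponding to~$V$ and $\Sigma_{\ge 1}=\Sigma_1$ corresponding to~$E$, with $\dim(\sigma)=0$ for $\sigma \in \Sigma_0$ and $\dim(\tau)=1$ for $\tau \in \Sigma_1$.

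With this dictionary in place, the result is a matter of reading off each term in the bound of Theorem~\ref{thm:TC}. The first two terms translate verbatim into $\sup_{v\in V}\TC(G_v)$ and $\sup_{v,w\in V,\,v\neq w}\cd(G_v\times G_w)$. For the third term, the supremum ranges over $\sigma \in \Sigma_0 \cup \Sigma_{\ge 1}$ and $\tau \in \Sigma_{\ge 1}$, so I would split it according to the dimension of~$\sigma$: the case $\sigma \in \Sigma_0$, $\tau \in \Sigma_1$ contributes $\gd(G_v \times G_e) + 0 + 1$ for $v\in V$, $e\in E$, and the case $\sigma,\tau \in \Sigma_1$ contributes $\gd(G_e \times G_d) + 1 + 1$ for $e,d\in E$ (not necessarily distinct). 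These match the third and fourth terms in the statement of Corollary~\ref{cor:TC}.

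I do not anticipate any substantive obstacle: Theorem~\ref{thm:TC} does all the heavy lifting, and the argument is pure bookkeeping once the Bass--Serre tree is identified as the appropriate contractible $G$-CW-complex. The only point worth a brief mention is that the standard construction of the Bass--Serre tree provides orbit representatives whose stabilisers are literally the vertex and edge groups of the graph of groups, so that the suprema produced by Theorem~\ref{thm:TC} can indeed be indexed by~$V$ and~$E$ rather than by arbitrary conjugates.
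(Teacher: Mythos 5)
Your proposal is correct and is exactly the argument the paper has in mind: the corollary is stated with an immediate \(\qed\), precisely because it follows by applying Theorem~\ref{thm:TC} to the Bass--Serre tree of the graph of groups and reading off the terms according to cell dimension, just as you did. The bookkeeping with \(\dim(\sigma)\in\{0,1\}\) and the identification of stabilisers with the vertex and edge groups is correct and complete.
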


For higher topological complexity~\cite{Farber-Oprea19}, results similar to Theorem~\ref{thm:TC} and Corollary~\ref{cor:TC} also hold.

\bibliographystyle{alpha}
\bibliography{newbib.bib}
	
\end{document}